\numberwithin{equation}{section}
\theoremstyle{plain}
\newtheorem{theorem}{Theorem}[section]
\newtheorem{corollary}[theorem]{Corollary}
\newtheorem{lemma}[theorem]{Lemma}
\newtheorem{proposition}[theorem]{Proposition}
\theoremstyle{definition}
\newtheorem{example}[theorem]{Example}
\theoremstyle{remark}
\newtheorem*{remark}{Remark}
\newcommand{\SL}{\text {\rm SL}}
\newcommand{\R}{\mathbb{R}}
\newcommand{\Z}{\mathbb{Z}}
\newcommand{\N}{\mathbb{N}}
\newcommand{\C}{\mathbb{C}}
\newcommand{\bbH}{\mathbb{H}}
\newcommand{\htis}{\widehat{T}_{r,N}}
\newcommand{\diff}{\widehat{T}_{r,N}(n) - \widehat{T}_{N-r,N}(n)}
\newcommand{\That}{\widehat{T}}
\newcommand\smod[1]{\ \left(\operatorname{mod} #1\right)}
\newcommand{\abcd}{\left(\begin{smallmatrix} a & b \\ c & d \end{smallmatrix}\right)}
\newcommand{\SLZ}{\SL_2(\Z)}
\newcommand{\calC}{\mathcal{C}}
\renewcommand{\Re}{\operatorname{Re}}
\renewcommand{\Im}{\operatorname{Im}}
\newcommand{\sign}{\operatorname{sgn}}
\begin{document}
\title[The number of parts in residue classes]
{The number of parts in certain residue classes of integer partitions}
\author{Olivia Beckwith and Michael H. Mertens}
\address{Department of Mathematics and Computer Science,
Emory University, Atlanta, Georgia 30322}
\email{obeckwith@gmail.com}
\email{mmerten@emory.edu}

\begin{abstract}
We use the Circle Method to derive asymptotics for functions related to the number of parts of partitions in particular residue classes.
\end{abstract}
\date{May 22nd, 2015}
\maketitle

\section{Introduction and statement of results}\label{sec:intro}

The Circle Method, certainly one of the most important techniques in analytic number theory, originates in the investigation of the partition function. Exploiting the modularity of the generating function of partition function, Hardy and Ramanujan \cite{HR} found their famous asymptotic formula for the number $p(n)$ of partitions of a natural number $n$,
\[p(n)\sim \frac{1}{4n\sqrt{3}}\exp\left(\pi\sqrt{\frac{2n}{3}}\right).\]
Later, Rademacher \cite{Rademacher} was able to refine the method of Hardy-Ramanujan to obtain his exact formula 
\begin{equation}\label{eqRademacher}
p(n)=\frac{2\pi}{(24n-1)^\frac 34} \sum_{k=1}^\infty \frac{A_k(n)}{k}I_\frac 32\left(\frac{\pi\sqrt{24n-1}}{6k}\right),
\end{equation}
where $I_\frac 32$ is the modified Bessel function of the first kind and $A_k(n)$ is a certain Kloosterman sum (see \eqref{eqKloosterman}).

In this paper, we study the number of parts in all partitions in certain congruence classes. The first work addressing the related question of how many parts does a ``generic'' partition of an integer $n$ contain, is the seminal work by Erd\H{o}s and Lehner \cite{ErdosLehner}. To be more precise, they showed that for large $n$, almost all partitions of $n$ contain
\[(1 + o(1)) \frac{\sqrt{6n}}{2 \pi} \log{n}\]
parts.

If $\lambda = (\lambda_0, \dots ,\lambda_k)$ is a partition, i.e. a non-increasing sequence of positive integers, we let 
\begin{equation}
T_{r,N} (\lambda) = | \{ \lambda_j : \lambda_j \equiv r \pmod{N} \} |.
\end{equation}
For a positive integer $n$ we then define 
\begin{equation}
 \htis (n) = \sum_{|\lambda| = n} T_{r,N} (\lambda),
\end{equation} 
where the summation runs over all partitions of size $n$. The quantity $\htis(n)$ counts the number of parts congruent to $r  \pmod{N}$ in all partitions of $n$. For example, all partitions of $5$ are
\[(5),\ (4,1),\ (3,2),\ (3,1,1),\ (2,2,1),\ (2,1,1,1),\ (1,1,1,1,1),\]
hence $\That_{1,3}(5)=13$ and $\That_{2,3}(5)=5$. We will study differences between these functions for $N \ge 3$ and $\gcd(r,N)= 1$. 

A formula giving a lower bound for the number of parts of a partition of $n$ in a residue class $r \pmod{d}$ for a proportion of partitions was proved by Dartyge, Sarkozy, and Szalay  \cite{DaSaSz} for $d < n^{\frac{1}{2} - \epsilon}$. The same authors in \cite{DaSaSz2} proved a formula for the expected number of parts of a partition in a residue class when the parts must be distinct.

In \cite{DaSa} Dartyge and Sarkozy proved an inequality that suggests that some residue classes occur as parts in a partition more frequently than others. They showed that for sufficiently large $n$ and $1 \le r < s \le N$, a positive proportion of the partitions of $n$ satisfy
$$
T_{r,N} (\lambda) - T_{s,N} (\lambda) > \frac{(r + s)\sqrt{n}}{50 rs}.
$$

Despite these results, little is known about the function $\htis(n)$. We will use the Circle Method to prove an asymptotic formula for $\diff$ for $\gcd(r,N) = 1$. 
\begin{theorem}\label{thm:main}
Let $r,N$ be coprime positive integers with $N\geq 3$ and $1\leq r< \tfrac N2$. Then we have that
\begin{align*}
\diff =& \frac{1}{2 \sqrt{2} \varphi (N) N} \left( \sum_{\psi(-1) = -1} \psi( r ')  \sum_{c = 1}^{N - 1} \psi(c) \cot \left( \frac{ \pi c}{N} \right) \right)
 \frac{e^{ \left(\pi \sqrt{ \frac{2}{3} \left( n - \frac{1}{24} \right)} \right)}}{\sqrt{ \left( n - \frac{1}{24} \right)} }\\
 & -\frac{1}{4\sqrt{3}\varphi(N)}\sum_{\psi(-1) = -1} \psi( r ')L(0,\psi)\frac{e^{ \left(\pi \sqrt{ \frac{2}{3} \left( n - \frac{1}{24} \right)} \right)}}{ n - \frac{1}{24}  }+ O\left( n^2 e^{ \left( \frac{\pi}{2} \sqrt{ \frac{2}{3} \left( n - \frac{1}{24} \right)} \right)} \right) ,
\end{align*}
where $\psi$ runs through all odd Dirichlet characters modulo $N$, $L(s,\psi)$ denotes the Dirichlet $L$-series associated to $\psi$, and $r'$ denotes the multiplicative inverse of $r$ modulo $N$.
\end{theorem}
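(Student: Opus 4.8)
The plan is to package the difference into a single $q$-series, identify its dominant singularity at $q=1$, and run the Circle Method so that the entire main term comes from that one cusp. First I would record the generating function. The total number of parts equal to a fixed $m$ taken over all partitions of $n$ has generating function $\frac{q^m}{1-q^m}\prod_{j\ge 1}(1-q^j)^{-1}$ (differentiate the factor $\sum_{k\ge 0}x^kq^{mk}$ at $x=1$), so summing over $m\equiv r\pmod N$ gives $\sum_n\htis(n)q^n=\big(\sum_{m\equiv r\,(N)}\tfrac{q^m}{1-q^m}\big)P(q)$ with $P(q)=\prod_{j\ge1}(1-q^j)^{-1}$. Orthogonality of Dirichlet characters together with $\psi(N-r)=\psi(-1)\psi(r)$ makes the even characters cancel in the difference, leaving
\[
\sum_{n\ge 1}\left(\htis(n)-\widehat{T}_{N-r,N}(n)\right)q^n=\frac{2}{\varphi(N)}\sum_{\psi(-1)=-1}\psi(r')\,L_\psi(q)\,P(q),\qquad L_\psi(q):=\sum_{m\ge1}\psi(m)\frac{q^m}{1-q^m}.
\]

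Next I would pin down the behavior of both factors as $q=e^{-s}\to1$. For the partition factor the modular transformation of the Dedekind eta function gives $P(e^{-s})\sim\sqrt{s/2\pi}\,\exp\!\big(\tfrac{\pi^2}{6s}-\tfrac{s}{24}\big)$, the $-s/24$ being exactly what turns $e^{ns}$ into $e^{(n-1/24)s}$ upon coefficient extraction. For $L_\psi$ I would use a Mellin transform: expanding $\tfrac{1}{e^{ms}-1}$ against $\Gamma(w)\zeta(w)(ms)^{-w}$ yields
\[
L_\psi(e^{-s})=\frac{1}{2\pi i}\int \Gamma(w)\zeta(w)L(w,\psi)\,s^{-w}\,dw,
\]
whose relevant poles are at $w=1$ (residue $L(1,\psi)/s$) and $w=0$ (residue $-\tfrac12L(0,\psi)$). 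Crucially, since $\psi$ is odd, every further residue at $w=-1,-2,\dots$ vanishes: for even $k$ one has $\zeta(-k)=0$, and for odd $k$ one has $L(-k,\psi)=0$. Hence $L_\psi(e^{-s})=\tfrac{L(1,\psi)}{s}-\tfrac12L(0,\psi)$ up to an error exponentially small in $1/s$, and this termination after two terms is precisely what produces a clean two-term asymptotic.

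With the expansions in hand I would extract coefficients via $a_n=\frac{1}{2\pi i}\oint G(q)q^{-n-1}\,dq$ on a circle of radius $e^{-\beta}$ and treat the arc near $q=1$ by steepest descent. Multiplying the two expansions produces the two integrals
\[
\frac{1}{2\pi i}\int s^{\mp 1/2}\exp\!\left(\frac{\pi^2}{6s}+\Big(n-\tfrac1{24}\Big)s\right)ds,
\]
classical Bessel-type integrals with saddle at $s_0=\pi/\sqrt{6(n-1/24)}$ evaluating to constant multiples of $(n-\tfrac1{24})^{-1/2}e^{\pi\sqrt{2(n-1/24)/3}}$ and $(n-\tfrac1{24})^{-1}e^{\pi\sqrt{2(n-1/24)/3}}$. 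Carrying the constants through reproduces the two displayed main terms; inserting the classical evaluation $L(1,\psi)=\tfrac{\pi}{2N}\sum_{c=1}^{N-1}\psi(c)\cot(\pi c/N)$ — obtained from the partial-fraction expansion of $\pi\cot$ after pairing $c\leftrightarrow N-c$ and using that $\psi$ is odd — converts the first term into the stated cotangent form.

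The main work, and the main obstacle, is the error term. The remaining arcs cluster near the other roots of unity $e^{2\pi i h/k}$ with $k\ge 2$, where the eta-transformation shows $P$ has a singularity of shape $\exp(\tfrac{\pi^2}{6k^2s})$, so their saddle-point exponent is at most $\tfrac{\pi}{2}\sqrt{2(n-1/24)/3}$, and a crude count of the number and size of these arcs supplies the factor $n^2$. The delicate point is that $L_\psi$ is an Eisenstein-type series of weight one rather than a cusp form, so it does not decay at the cusps; I would need its transformation law at each $h/k$ together with uniform bounds on its coefficients (equivalently on the twisted divisor sums $\sum_{d\mid n}\psi(d)$) both to bound the non-principal arcs and to absorb the exponentially small remainder at $q=1$ into the same error, in the spirit of Rademacher's rigorous treatment of $p(n)$. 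Making the steepest-descent estimate on the principal arc honest, with the contour tails controlled at the $e^{(\pi/2)\sqrt{\cdots}}$ scale, is the final technical step.
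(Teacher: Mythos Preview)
Your approach is correct and runs parallel to the paper's, but the execution at the dominant cusp is genuinely different. The paper packages $\sum_{\psi\text{ odd}}\psi(r')L_\psi(q)$ as a linear combination of weight-$1$ Eisenstein series $E_1^\psi$ (plus the constant $c_{r,N}$), and then applies Rademacher's method in full: Farey dissection, Ford circles, and the explicit transformation law of the $g_1^v$ basis at every cusp $h/k$. This yields an entire Bessel-sum expansion over $k$ (their equation for $T_2$), from which the $k=1$ term is isolated and the $k\ge 2$ terms supply the $O(n^2e^{(\pi/2)\sqrt{\cdots}})$ error. Your route replaces the Eisenstein transformation at the principal cusp by the Mellin--Barnes representation $L_\psi(e^{-s})=\frac{1}{2\pi i}\int\Gamma(w)\zeta(w)L(w,\psi)s^{-w}\,dw$, and your observation that the trivial zeros of $\zeta$ and of $L(\cdot,\psi)$ (for $\psi$ odd) kill every residue past $w=0$ is a clean explanation of why exactly two main terms emerge. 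What the paper's approach buys is more structure: it produces the full Rademacher-type expansion, not just the leading two terms, and it treats the constant $c_{r,N}$ piece by literally invoking Rademacher's exact formula for $p(n)$. What your approach buys is economy at $q=1$: no Eisenstein machinery is needed there, only standard Dirichlet-series facts.

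One point to tighten: the Mellin shift alone gives $L_\psi(e^{-s})=L(1,\psi)/s-\tfrac12 L(0,\psi)+O_M(s^M)$ for every $M$, but ``exponentially small in $1/s$'' does not follow from vanishing residues without further argument (optimizing $M\sim 1/|s|$, or---more directly---invoking the modularity of $E_1^\psi$ under $\tau\mapsto-1/\tau$). You need the exponential remainder, not merely $O(s^M)$, to push that piece below the $e^{(\pi/2)\sqrt{\cdots}}$ threshold; otherwise the major-arc tail would only be polynomially smaller than the main term. For the minor arcs you correctly identify that the transformation of $L_\psi$ at each $h/k$ is required together with coefficient bounds; the paper carries this out explicitly via its Lemma on the Fourier coefficients $a_n(h,k)$ of $E_{r,N}[\alpha_{h,k}]_1$, and that is essentially the ingredient your sketch defers.
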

\begin{remark}
In the proof of \Cref{thm:main}, we also give further terms of an asymptotic expansion of $\diff$, see \eqref{eqAsymp}.
\end{remark}

\begin{example}
Let $N=3$ and $r=1$. Then the formula in \Cref{thm:main} simplifies to
$$\That_{1,3} (n) - \That_{2,3} (n) \sim \frac{1}{6 \sqrt{6}} \left(\frac{1} {\sqrt{n - \frac{1}{24}}}-\frac{1} {2\sqrt{2}\left(n - \frac{1}{24}\right)}\right)e^{ \pi \sqrt{ \frac{2}{3} \left( n - \frac{1}{24}\right)} }.$$
We denote by $Q(n)$ the quotient of the left-hand side of the above asymptotic equation by its right-hand side. \Cref{numerics1} contains numerical values of $Q(n)$ for various $n$, illustrating that the above asymptotic is a relatively good approximation for large $n$.
\begin{table}[h!]
\begin{tabular}{|c|c|c|c|c|c|}
\hline 
$n$ & $10$ & $100$ & $1,000$ & $10,000$ & $100,000$  \\ 
\hline
$Q(n)$ & 1.00417 & 1.00142 & 1.00013 & 1.00001 & 1.00000  \\ 
\hline 
\end{tabular}
\vspace{0.5cm}
\caption{Numerics for \Cref{thm:main}}
\label{numerics1}
\end{table}
\end{example}
As it turns out, the generating function of $\diff$ is a weakly holomorphic modular form of weight $\tfrac 12$ for $\Gamma_1(N)$ so that we can essentially follow Rademacher's proof for his exact formula for $p(n)$. 

Next to these differences we also determine asymptotics for $\That_{0,N}$ for arbitrary $N$. In this case, the generating function is \emph{not} modular, but rather the period function of a Maa{\ss}-Eisenstein series times a weakly holomorphic modular form. In this case, we employ a different variant of the Circle Method due to Wright \cite{Wright}, which has essentially been rediscovered and used in various contexts by K. Bringmann, K. Mahlburg, and their collaborators, see e.g. \cite{BM1,BM2}. We use a very convenient formulation due to Ngo and Rhoades \cite{RhoadesNgo}.
\begin{theorem}\label{thm:main2}
For fixed $N\in\N$  we have
\[T_{0,N}(n)=\frac{e^{2\pi\sqrt{\frac n6}}n^{-\frac 12}}{4\pi N\sqrt{2}}\left[\log n-\log\left(\frac{\pi^2}{6}\right)+2\gamma_E-2\log N+O(n^{-\frac 12}\log n)\right],\]
where $\gamma_E=0.577215664...$ is the Euler-Mascheroni constant, as $n\rightarrow\infty$.
\end{theorem}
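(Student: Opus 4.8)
The plan is to follow Wright's variant of the Circle Method in the Ngo--Rhoades formulation, which needs two ingredients: an explicit generating function for $T_{0,N}(n)$, and a sufficiently precise asymptotic expansion of that generating function as $q\to 1$ radially, together with adequate control on the remainder of the circle. First I would record the generating function. Since a part of size $j$ contributes to $T_{0,N}$ exactly when $N\mid j$, and the total number of parts equal to $j$ across all partitions of $n$ has generating function $\frac{q^j}{1-q^j}\prod_{k\ge1}(1-q^k)^{-1}$, summing over $j=N\ell$ yields
\[\sum_{n\ge 0}T_{0,N}(n)\,q^n = P(q)\sum_{\ell\ge1}\frac{q^{N\ell}}{1-q^{N\ell}},\qquad P(q)=\prod_{k\ge1}\frac{1}{1-q^k}.\]
The Lambert series $L(q):=\sum_{\ell\ge1}\frac{q^{N\ell}}{1-q^{N\ell}}=\sum_{M\ge1}d(M)q^{NM}$ is the non-modular factor alluded to just before the statement.

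Next I would determine the behaviour of this product for $q=e^{-z}$ as $z\to0$ in a cone about $\R_{>0}$. The modular transformation of $\eta$ gives the classical estimate $P(e^{-z})\sim\sqrt{z/2\pi}\,\exp(\pi^2/6z)$, with uniform control in the cone. For $L$ I would use its Mellin transform: since $\sum_{M}d(M)M^{-s}=\zeta(s)^2$, one has $L(e^{-z})=\frac{1}{2\pi i}\int_{(c)}\Gamma(s)\zeta(s)^2(Nz)^{-s}\,ds$, and shifting the contour to the left collects the double pole of $\zeta(s)^2$ at $s=1$ and the simple pole of $\Gamma$ at $s=0$, producing
\[L(e^{-z})=\frac{1}{Nz}\bigl(\gamma_E-\log N-\log z\bigr)+\tfrac14+O(z).\]
Multiplying the two expansions gives $F(e^{-z})\sim \frac{1}{N\sqrt{2\pi}}\,z^{-1/2}\bigl(\gamma_E-\log N-\log z\bigr)\exp(\pi^2/6z)$.

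Then I would extract coefficients from $T_{0,N}(n)=\frac{1}{2\pi i}\oint F(q)q^{-n-1}\,dq$ by the saddle-point method on the circle $|q|=e^{-z_0}$, where $z_0=\pi/\sqrt{6n}$ is the saddle of $\pi^2/6z+nz$. The only subtlety is the factor $\log z$, which I would handle by the parameter device $-z^{-1/2}\log z=-\frac{d}{ds}z^{-1/2+s}\big|_{s=0}$. Applying the Ngo--Rhoades asymptotic to the pure power $z^{-1/2+s}e^{\pi^2/6z}$ gives coefficients $\frac{1}{2\sqrt\pi}n^{-1/2}(\pi^2/6n)^{s/2}e^{2\pi\sqrt{n/6}}$; differentiating at $s=0$ turns the $\log z$ into $\tfrac12(\log n-\log(\pi^2/6))$ and feeds the leading $\log n$ into the answer, while the $(\gamma_E-\log N)z^{-1/2}$ piece supplies the $2\gamma_E-2\log N$ term. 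Inserting the overall factor $\frac{1}{N\sqrt{2\pi}}$ and collecting constants reproduces the stated coefficient $\frac{1}{4\pi N\sqrt2}$, and the subdominant $\tfrac14$ term (whose coefficient is $\sim p(n)/4 \sim n^{-1}e^{2\pi\sqrt{n/6}}$) is absorbed into the error.

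The main obstacle is the estimate on the complementary (minor) arc: I must show that $F(e^{-z})$ is exponentially smaller than its value at the saddle once $z$ leaves a small neighbourhood of $z_0$ on the positive real axis, and that the error made in replacing $F$ by its radial expansion is of the claimed relative size $O(n^{-1/2}\log n)$. Since $P(e^{-z})$ is sharply peaked at the real axis while $L$ grows only like $z^{-1}\log(1/|z|)$, the bound on $P$ dominates and forces the minor arc to be negligible; the delicate bookkeeping will be making the relative error uniform and tracking it faithfully through the $s$-differentiation, together with the next term in the Mellin expansion of $L$, to certify the error term in the bracket.
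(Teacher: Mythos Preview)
Your proposal is correct and follows essentially the same route as the paper: both factor the generating function as $P(q)\cdot S_0(q^N)$, obtain the $z\to 0$ asymptotic of the divisor Lambert series (you via Mellin inversion against $\Gamma(s)\zeta(s)^2$, the paper by citing the Bettin--Conrey/Ngo--Rhoades expansion in \Cref{periodasymp}), and then extract coefficients by Wright's Circle Method in the Ngo--Rhoades formulation. The only cosmetic difference is that the paper splits $L$ into a polynomial-type piece and a logarithmic-type piece and invokes the two black-box results (\Cref{Wrightpoly} and \Cref{Wrightlog}) separately, whereas you recover the logarithmic contribution by differentiating the polynomial-type asymptotic in an auxiliary exponent $s$; this is a standard equivalent device, though you should note that it requires the Wright/Ngo--Rhoades error terms to be uniform in $s$ near $0$, which is routine but should be stated.
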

\begin{example}
We also would like to illustrate the convergence of this asymptotic. Denote by $Q_N(n)$ the quotient of $T_{0,N}(n)$ by the main term in \Cref{thm:main2}. We give some numerical values of $Q_N(n)$ for various $N$ and $n$ in \Cref{numerics2} below.
\begin{table}[h!]
\begin{tabular}{|c|c|c|c|c|c|}
\hline 
$n$ & $10$ & $100$ & $1,000$ & $10,000$ & $100,000$  \\ 
\hline
$Q_1(n)$ & 1.09403 & 1.01393 & 1.00260 & 1.00050 & 1.00029  \\ 
\hline 
$Q_3(n)$ & 1.79224 & 1.06709 & 1.01177 & 1.00247 & 1.00075 \\
\hline
$Q_6(n)$ & -0.81043 & 1.23311 & 1.03137 & 1.00617 & 1.00157 \\
\hline
\end{tabular}
\vspace{0.5cm}
\caption{Numerics for \Cref{thm:main2}}
\label{numerics2}
\end{table}

We observe that the approximation is not quite as good as in \Cref{thm:main} which is due to the larger error term that we have here.
\end{example}
\begin{remark}
Setting $N=1$ in \Cref{thm:main2} recovers Theorem 1.5 in \cite{RhoadesNgo}, where a different combinatorial interpretation for the number $T_{0,1}(n)$ was used.
\end{remark}

In \Cref{secPrelim}, we prove a formula for the generating function $\htis (n)$ and prove a preliminary lemma about odd Dirichlet characters. In \Cref{secEisenstein}, we describe the weight 1 Eisenstein series and relate the generating function for $\diff$ to weight one Eisenstein series. \Cref{secWright} recalls a variant of Wright's Circle Method from \cite{RhoadesNgo} that we use later to prove \Cref{thm:main2}. Finally, in \Cref{secProof1} we prove both our main theorems. 

\section*{Acknowledgements}
The authors would like to thank Ken Ono for suggesting this project and helpful discussions. For suggestions on how to improve an earlier draft of this paper they thank Kathrin Bringmann, Frank Garvan, Karl Mahlburg, Sharon Garthwaite, Jos\'e-Miguel Zapata-Rol\'on, and Sander Zwegers.

\section{Preliminary Results}\label{secPrelim}
We prove a formula for the generating function for $\htis (n)$.
\begin{lemma}\label{thm:qseries}
$\htis$ has the following generating function,
$$ \sum_{n=1}^{\infty} \htis (n) q^n = \left(\prod_{n \ge 1}\frac{1}{ (1 - q^n)} \right) \left( \sum_{n=1}^{\infty} \left( \sum_{\substack{ d |n \\ d \equiv r \smod{N}} } q^n \right) \right). $$
\end{lemma}
\begin{proof} 
Note that 
$$ \frac{q^m}{(1 - q^m)^2} = \sum_{k = 1}^{\infty} k q^{km}.$$

Then, modifying the proof of Euler's formula for the generating function of $p(n)$, we have that

$$
\sum_{n =  0}^{\infty} a(n)q^n = \frac{q^m}{(1 - q^m)^2} \cdot \prod_{\substack{n \geq 1 \\ n \neq m}} \frac{1}{1 - q^n},
$$
where $a(n)$ equals the number of times $m$ appears as a part in a partition of $n$. 

Thus, summing over $m \equiv r \pmod{N}$, we have
{\allowdisplaybreaks 
\begin{align*} 
\sum_{n=1}^{\infty} \htis (n) q^n &=\sum_{\substack{m\geq 1 \\ m \equiv r \smod{N}}} \frac{q^m}{(1 - q^m)^2} \cdot \prod_{ \substack{n\geq 1\\ n \neq m}} \frac{1}{1 - q^n}\\
&= \left( \prod_{n\ge 1} \frac{1}{1 - q^n} \right) \sum_{\substack{m\ge 1 \\ m \equiv r \smod{N}}} \left( \sum_{k =1}^{\infty} q^{km} \right) \\
&= \left( \prod_{n \ge 1} \frac{1}{(1 - q^n)} \right) \left( \sum_{n=1}^{\infty} \left( \sum_{\substack{ d |n \\ d \equiv r \smod{N}} } q^n \right) \right)  . 
\end{align*}
}
This proves the lemma.
\end{proof}

The next corollary, which shows that the odd Dirichlet characters obey the same orthogonality relations as the usual group of Dirichlet characters, will be useful to us in relating the generating function of $\diff$ to a combination of weight $1$ Eisenstein series. First, we will show a slightly more general lemma about characters of finite abelian groups.

\begin{lemma}\label{lemChar}
Let $G$ be a finite abelian group containing an element $u\in G\setminus\{1\}$ with $u^2=1$. Fix such a $u$, then we have the following equality,
\[\frac{2}{n}\sum\limits_{\substack{\psi\in \widehat{G} \\ \psi(u)=-1}}\psi(g)=\begin{cases} 1 & g=1 \\ -1 & g=u \\ 0 & \text{otherwise}.\end{cases},\]
where $n:=|G|$ and $\widehat{G}:=\operatorname{Hom}(G,\C^*)$ denotes the dual group of $G$.
\end{lemma}
\begin{proof}
Since $G$ is finite and abelian, it is a direct product of cyclic groups 
\[G\cong C_{d_1}\times ...\times C_{d_\ell}\]
with $d_1|...|d_\ell$. Since we can define characters on each component separably, we can assume without loss of generality that $G=\langle g\rangle$ is cyclic. Note that the existence of an element $u$ of order $2$ in $G$ assures that at least one of the cyclic factors above has even order. Now if $\zeta_n$ is a primitive $n$th root of unity, each character $\psi$ of $G$ is uniquely determined by setting
\[\psi(g)=\zeta_n^k\]
for some $k\in\{0,...,n-1\}$. Since $u=g^\frac n2$, we see that the condition $\psi(u)=-1$ forces $k$ to be odd. Thus we have for any $j\in\{0,...,n-1\}$ that
\begin{align*}
&\sum\limits_{\substack{\psi\in \widehat{G} \\ \psi(u)=-1}}\psi(g^j)
=\sum\limits_{\substack{k=0 \\ k\text{ odd}}}^n\zeta_n^{jk}
=\zeta^j\sum\limits_{k=0}^{\frac n2}\zeta_n^{2jk}
=\zeta^j\sum\limits_{k=0}^{\frac n2}\zeta_{\frac n2}^{jk}.
\end{align*}
Now, unless $j$ is a multiple of $\tfrac n2$, the last sum vanishes because it ranges over all $d$th roots of unity, where $d=\gcd(jk,\tfrac n2)$. For $j=0$, the whole expression obviously becomes $\tfrac n2$, for $j=\tfrac n2$, we obtain $-\tfrac n2$, proving the assertion.
\end{proof}

\begin{corollary}\label{thm:dirchars}
Let $\gcd(r,N) =1$. Then

$$ \phi_{r,N} (n) := \frac{2}{\varphi(N)} \sum_{\substack{ \psi \smod{N} \\ \psi(-1) = -1}} \psi( n \cdot r') = 
\begin{cases}
1, & \text{if }n \equiv r \pmod{N} \\
-1, & \text{if }n \equiv -r \pmod{N} \\
0 & \text{otherwise}
\end{cases} $$

where $\varphi(N):=|(\Z/N\Z)^*|$ denotes Euler's totient function. The summation over $\psi$ runs over all odd characters modulo $N$ and $r'$ is any integer such that $rr' \equiv 1 \pmod{N}$. 
\end{corollary}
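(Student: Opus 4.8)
The plan is to deduce this directly from \Cref{lemChar} applied to the group $G=\ZN$, which has order $\varphi(N)$, taking $u$ to be the residue class of $-1$. First I would check that this choice of $u$ is legitimate: since $N\geq 3$ we have $-1\not\equiv 1\pmod N$, so $u\in G\setminus\{1\}$, and clearly $u^2=1$. Under the canonical identification of $\widehat{G}$ with the group of Dirichlet characters modulo $N$, the condition $\psi(u)=-1$ becomes exactly $\psi(-1)=-1$, i.e. $\psi$ is odd, so the sum in \Cref{lemChar} ranges over precisely the odd characters appearing in $\phi_{r,N}$.

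Next I would treat the case $\gcd(n,N)=1$, where the argument $n\cdot r'$ really is an element of $G$. Setting $g:=nr'\bmod N$, I apply \Cref{lemChar} (with $|G|=\varphi(N)$ in place of the lemma's $n$) to conclude that $\tfrac{2}{\varphi(N)}\sum_{\psi(-1)=-1}\psi(nr')$ equals $1$, $-1$, or $0$ according as $g=1$, $g=u$, or neither. It then remains to translate these group-theoretic conditions back into congruences: using the defining relation $rr'\equiv 1\pmod N$, one sees that $g=1\iff nr'\equiv 1\iff n\equiv r\pmod N$ and $g=u\iff nr'\equiv -1\iff n\equiv -r\pmod N$. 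This yields exactly the first two cases of the corollary, with every remaining residue coprime to $N$ falling into the ``$0$'' case.

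Finally I would dispose of the case $\gcd(n,N)>1$. Here $nr'$ is not a unit modulo $N$, so by the standard convention every Dirichlet character modulo $N$ vanishes on it, forcing $\phi_{r,N}(n)=0$. Since $\gcd(r,N)=1$, neither $n\equiv r$ nor $n\equiv -r\pmod N$ can hold in this case, so the ``otherwise'' branch of the corollary also returns $0$, and the two descriptions agree.

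The computation is essentially bookkeeping. The one genuine point of care is the clash of notation---the integer $n$ in the corollary versus $n=|G|$ in \Cref{lemChar}---together with the need to handle the non-coprime residues separately, since there the abstract lemma does not apply directly and one must instead invoke the vanishing convention for Dirichlet characters on integers sharing a factor with $N$.
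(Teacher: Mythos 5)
Your proposal is correct and follows exactly the route the paper takes: the paper's entire proof is the single sentence that the corollary is an immediate consequence of \Cref{lemChar}, and you have simply filled in the bookkeeping (choosing $G=\ZN$, $u=-1$, translating $\psi(u)=-1$ into oddness, and noting that characters vanish on residues not coprime to $N$). Your observation that $N\geq 3$ is needed so that $-1\not\equiv 1\pmod N$ is a worthwhile detail the paper leaves implicit.
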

\begin{proof}
This is an immediate consequence of \Cref{lemChar}.
\end{proof}

\section{Weight 1 Eisenstein Series}\label{secEisenstein}
In this section, we recall the relevant facts about Eisenstein series of weight $1$. For a general reference, the reader is referred to Section 4.8. in \cite{DS}. We use the usual convention that $\tau$ is a variable from the complex upper half-plane and $q=e^{2\pi i\tau}$. For a function $f:\bbH\rightarrow\C$, a weight $k\in\Z$, and a matrix $\gamma=\abcd\in\SLZ$ we define the operator
\[f[\gamma]_k(\tau):=(c\tau+d)^{-k}f\left(\frac{a\tau+b}{c\tau+d}\right).\]

For $v = (c_v, d_v) \in ((\Z / N \Z)^*)^2$ with order $N$ we define 
$$
g_1^{v} (\tau) = \delta (c_v) \zeta^{d_v} (1) + \frac{2 \pi i}{N} \left( \frac{\overline{c_v}}{N} - \frac{1}{2} \right) - \frac{2 \pi i}{N} \sum_{n=1}^{\infty} \left( \sum_{\substack{ m|n \\ \frac{n}{m} \equiv c_v \smod{N}}} \sign(m) e^{\frac{2 \pi i d_v m}{N}} \right) q^{\frac{n}{N}}.
$$

In this formula, $\overline{ c_v}$ denotes the integer such that $0 \le \overline{c_v} < N$ and $c_v \equiv \overline{c_v} \pmod{N}$. We also define
$$
\delta( c_v ) := \begin{cases}
       1 & \overline{c_v} = 0, \\
       0 & \text{ otherwise,}
     \end{cases}   
$$
and the function $\zeta^d(k)$ by
$$
\zeta^{d} (s) := \sum_{\substack{n\in\Z \\ n\equiv d\smod N}}\frac{
1}{n^s}
$$
for $\Re(s)>1$ and otherwise by analytic continuation. We shall mainly need the special value (see \cite{DS}, Equation (4.22) and Exercise 4.4.5) 
\begin{equation}\label{eqZeta}
\zeta^d(1)=\frac{\pi i}{N} + \frac{\pi}{N} \cot \left( \frac{\pi d}{N} \right)\qquad (\gcd(d,N)=1).
\end{equation}

The formula for $g_1^{v} (\tau)$ is very similar to the typical Eisenstein series for $k \ge 3$, but contains a correction term. One can show that $g_1^v(\tau)$ is a weight 1 modular form with respect to the principal congruence subgroup $\Gamma(N)$, and satisfies the equation
$$
g_1^v [\gamma]_1 (\tau) = g_1^{\gamma(v)} (\tau),
$$
with
$$
\gamma(v) = (a c_v + c d_v, b c_v + d d_v    ).
$$
for any $\gamma=\abcd\in \SLZ$.

To obtain forms which are weight 1 invariant with respect to $\Gamma_1(N)$, one generally takes special linear combinations of the $g_1^v$ functions as follows: Let $\psi, \chi$ be Dirichlet characters modulo $u$ and $v$ respectively with $uv = N$ such that $\chi$ is primitive and $\psi \chi$ is odd. Define $G_1^{\psi, \chi}$ as follows:
\begin{equation}\label{equation:G1}
G_1^{\psi, \chi} (\tau) = \sum_{c = 0}^{u-1} \sum_{d = 0}^{v-1} \sum_{e = 0}^{u-1} \psi(c) \overline{\chi}(d) g_1^{(cv, d + ev)} (\tau).
\end{equation}
In our arguments, we choose to take $v$ to be $1$ so that $\chi$ is trivial and $\psi$ is an odd character with respect to modulus $N$. We let $E_1^{\psi} (\tau)$ denote the normalized series given by $- \frac{1}{2 \pi i } G_1^{\psi, 1} (\tau)$. The Fourier series of $E_1^{\psi} (\tau)$ is given by (see \cite{DS}, p. 140)
\begin{equation}\label{equation:Epsi}
E_1^{\psi} (\tau) = L(0, \psi) + 2 \sum_{n =1}^{\infty} \left(\sum_{d |n} \psi(d)\right) q^n.
\end{equation}

The next result connects the $E_1^{\psi}$ series to the generating function for $\diff$. 
\begin{proposition}\label{thm:eisenstein}
If $N \ge 3$ and $\gcd(r,N) = 1$, then 
$$
G_{r,N} (q) := \sum_{n \ge 1} \left( \diff \right) q^n= \frac{1}{\varphi(N)} \frac{q^{\frac{1}{24}}}{\eta(\tau)} \left(c_{r,N} + \sum_{\substack{\psi \smod{N} \\ \psi(-1) = -1}} \psi(r') E_1^{\psi}(\tau)\right),
$$

where 
$$c_{r,N} = - \sum_{\substack{\psi \smod{N} \\ \psi(-1) = -1}} \psi(r')L(0,\psi)$$ 
and $L(s,\psi)$ denotes the Dirichlet $L$-series associated to $\psi$.
\end{proposition}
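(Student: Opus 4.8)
The plan is to reduce the proposition to a direct computation assembled from the three facts already in hand: the product formula of \Cref{thm:qseries}, the character orthogonality of \Cref{thm:dirchars}, and the Fourier expansion \eqref{equation:Epsi}. First I would apply \Cref{thm:qseries} twice, once to $\htis$ and once to $\That_{N-r,N}$, and subtract. Since the factor $\prod_{n\ge 1}(1-q^n)^{-1}=q^{1/24}/\eta(\tau)$ does not depend on the residue class, it is common to both series and can be pulled out front. What remains is the difference of the two Lambert-type sums, whose coefficient of $q^n$ is
\[
\sum_{\substack{d\mid n\\ d\equiv r\smod N}}1-\sum_{\substack{d\mid n\\ d\equiv N-r\smod N}}1=\sum_{d\mid n}\left(\mathbf{1}_{d\equiv r}-\mathbf{1}_{d\equiv N-r}\right).
\]
Because $N-r\equiv -r\smod N$, this is exactly $\sum_{d\mid n}\phi_{r,N}(d)$, with the signed indicator $\phi_{r,N}$ of \Cref{thm:dirchars}.

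Next I would substitute the orthogonality formula $\phi_{r,N}(d)=\frac{2}{\varphi(N)}\sum_{\psi(-1)=-1}\psi(dr')$ and interchange the (finite) sum over odd characters $\psi$ with the sums over $n$ and over $d\mid n$; this is legitimate since the character sum is finite and the $q$-series converge for $|q|<1$. Writing $\psi(dr')=\psi(r')\psi(d)$ and extracting $\psi(r')$, the generating function takes the shape
\[
G_{r,N}(q)=\frac{2}{\varphi(N)}\,\frac{q^{1/24}}{\eta(\tau)}\sum_{\substack{\psi\smod N\\ \psi(-1)=-1}}\psi(r')\sum_{n\ge 1}\Bigl(\sum_{d\mid n}\psi(d)\Bigr)q^n.
\]

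Finally I would recognize the inner generating function via \eqref{equation:Epsi}, namely $\sum_{n\ge 1}\bigl(\sum_{d\mid n}\psi(d)\bigr)q^n=\tfrac12\bigl(E_1^{\psi}(\tau)-L(0,\psi)\bigr)$. Substituting this, the factor $2$ coming from \Cref{thm:dirchars} cancels the $\tfrac12$ from the normalization of $E_1^{\psi}$, and gathering the constant terms as $-\sum_{\psi(-1)=-1}\psi(r')L(0,\psi)=c_{r,N}$ yields precisely the claimed identity.

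I do not expect a genuine obstacle here, as the argument is essentially bookkeeping. The two points that require care are: (i) verifying that divisors $d$ with $\gcd(d,N)>1$ contribute nothing on either side, which is automatic because such $d$ are never congruent to $\pm r$ and satisfy $\psi(d)=0$; and (ii) tracking the two normalizing constants so that the factor of $2$ and the $\tfrac12$ cancel cleanly and the $L(0,\psi)$ terms assemble correctly into $c_{r,N}$. Beyond these normalization checks, every step is an application of a result stated above.
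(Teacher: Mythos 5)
Your proposal is correct and follows essentially the same route as the paper's proof: apply \Cref{thm:qseries} to each of $\htis$ and $\That_{N-r,N}$, identify the difference of divisor sums with $\sum_{d\mid n}\phi_{r,N}(d)$ via \Cref{thm:dirchars}, and then recognize the resulting character-twisted divisor sums as $\tfrac12\bigl(E_1^{\psi}(\tau)-L(0,\psi)\bigr)$ using \eqref{equation:Epsi}, with the $2$ and $\tfrac12$ cancelling. The normalization bookkeeping you flag is exactly what the paper's computation carries out.
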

\begin{proof}
First, we rewrite $G_{r,N} (q)$ using Lemma \ref{thm:qseries}.

\begin{align*}
G_{r,N} (q) &=\prod_{n=1}^\infty \frac{1}{1 - q^n} \left( \sum_{m=1}^\infty \left(\sum_{\substack{d|m \\ d\equiv r\smod{N}}}-\sum_{\substack{d|m \\ d\equiv -r\smod{N}}}\right)q^m\right).
\end{align*}

By \Cref{thm:dirchars} we see that the coefficient of $q^m$ is $\sum_{d|m} \phi_{r,N} (d)$, so that we can write
$$
G_{r,N} (q)  = \frac{1}{\varphi(N)}\prod_{n=1}^\infty \frac{1}{1 - q^n} \left( \sum_{n = 1}^\infty \left( \sum_{d | n}  \sum_{\substack{ \psi \smod{N} \\ \psi(-1) = -1}} \psi(d) \psi(r') \right) q^n \right) .
$$
Finally, we write this expression in terms of the Eisenstein series given in \eqref{equation:Epsi}. 
\begin{align*}
G_{r,N}(q) &= \frac{1}{\varphi(N)} \frac{q^{\frac{1}{24}}}{\eta(\tau)} \left( \sum_{\substack{ \psi \smod{N} \\ \psi(-1) = -1}} \left( \psi(r') E_{1}^{\psi} (\tau) - \psi(r') L(0,\psi)  \right) \right) \\
&= \frac{1}{\varphi(N)} \frac{q^{\frac{1}{24}}}{\eta(\tau)} \left(c_{r,N}+ \sum_{\substack{ \psi \smod{N} \\ \psi(-1) = -1}} \psi(r') E_1^{\psi} (\tau) \right)  .\\
\end{align*}
\end{proof}

\bigskip

Our proof will also require some information about the behavior of the Eisenstein series in \Cref{thm:eisenstein} near the cusps, that is, near $\tau = \frac{h}{k}$. 

Let $E_{r,N} (\tau)$ denote the Eisenstein series in \Cref{thm:eisenstein}, that is 
$$E_{r,N}(\tau) := \sum_{\substack{\psi \smod{N} \\ \psi(-1) = -1}} \psi(r') E_1^{\psi}(\tau).$$
\begin{lemma}\label{thm:cuspbehavior}
Let $r,N$ be fixed positive integers with $\gcd(r,N) = 1$. Let $h,k$ be integers with $h \le k$ and $\gcd(h,k) = 1$. 
Let $H$ be such that $1 \le H \le k$ and $h H \equiv -1 \pmod{k}$. Let 
$$
\alpha_{h,k} := 
\begin{pmatrix}
-h & \frac{hH+1}{k} \\
-k & H
\end{pmatrix}
$$

Then
$$
E_{r,N} [ \alpha ] _1 (\tau) = \sum_{n = 0}^{\infty} a_n(h,k) q^{\frac{n}{N}} 
$$
where the following are true:
\begin{equation}\label{equation:cuspval}
a_0 (h,k) = \sum_{\substack{ \psi \smod{N} \\ \psi(-1) = -1}} c_{\psi} (h,k) \psi (r ')
\end{equation} 
where
\begin{equation}
c_{\psi}(h,k)=  -\frac{1}{ 2 \pi i} \sum_{c=0}^{N-1} \sum_{e = 0}^{N-1} \psi (c)  \left(\delta( -hc -ke) \zeta^{( \frac{hH +1}{k})c +H e}(1) + \frac{2 \pi i}{N} \left( \frac{\overline{(-hc - ke)}}{N} - \frac{1}{2} \right)\right)
\end{equation}
and $rr' \equiv 1 \pmod{N}.$

Also, we have the following:
$$|a_0(h,k)| \le C_0, $$
and for all $n \ge 1$, 
$$
|a_n (h,k)| \le C_1 n,
$$
where $C_0, C_1,$ depend on $N$ but do not depend on $h$ or $k$. 
\end{lemma}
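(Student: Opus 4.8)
The plan is to compute the action of the matrix $\alpha_{h,k}$ on $E_{r,N}$ directly, by pushing the slash operator through the defining sum over odd characters and then through the linear combination defining each $E_1^\psi$. Since $E_1^\psi = -\frac{1}{2\pi i}G_1^{\psi,1}$ and $G_1^{\psi,1}$ is built out of the $g_1^v$ via \eqref{equation:G1} (with $v=1$, so $G_1^{\psi,1}(\tau) = \sum_{c=0}^{N-1}\psi(c)g_1^{(c,0)}(\tau)$), the first step is to invoke the transformation law $g_1^v[\gamma]_1(\tau) = g_1^{\gamma(v)}(\tau)$ with $\gamma(v) = (ac_v + cd_v,\, bc_v + dd_v)$. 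Applying this with $\gamma = \alpha_{h,k}$ to each $g_1^{(c,0)}$ sends the index $(c,0)$ to $(-hc - ke,\, \frac{hH+1}{k}c + He)$; I would account for the full index set $(cv, d+ev)$ with $v=1$ reducing to tracking the pair as written in the statement of $c_\psi(h,k)$, giving
$$
E_{r,N}[\alpha_{h,k}]_1(\tau) = -\frac{1}{2\pi i \, }\sum_{\psi(-1)=-1}\psi(r')\sum_{c=0}^{N-1}\sum_{e=0}^{N-1}\psi(c)\, g_1^{(-hc-ke,\,(\frac{hH+1}{k})c + He)}(\tau).
$$
This is purely formal once the transformation law is granted, and it already exhibits the Fourier expansion $\sum_{n\ge 0} a_n(h,k)q^{n/N}$ because each $g_1^v$ has such an expansion.

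Next I would read off the constant term $a_0(h,k)$ and the higher coefficients from the explicit Fourier series of $g_1^v(\tau)$ given in the definition. The constant term of $g_1^v$ is $\delta(c_v)\zeta^{d_v}(1) + \frac{2\pi i}{N}\bigl(\frac{\overline{c_v}}{N} - \frac12\bigr)$; substituting the transformed indices $c_v = -hc - ke$ and $d_v = (\frac{hH+1}{k})c + He$ and multiplying by $-\frac{1}{2\pi i}$ yields exactly the claimed formula for $c_\psi(h,k)$, and summing against $\psi(r')$ gives \eqref{equation:cuspval}. For the higher coefficients, the $q^{n/N}$ coefficient of $g_1^v$ is $-\frac{2\pi i}{N}\sum_{m\mid n,\ n/m\equiv c_v}\sign(m)e^{2\pi i d_v m/N}$.

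The genuinely substantive part is the uniform bound $|a_0(h,k)|\le C_0$ and $|a_n(h,k)|\le C_1 n$ with constants independent of $h,k$. For $a_0$: the quantity $\zeta^{d_v}(1)$ is given by \eqref{eqZeta} as $\frac{\pi i}{N} + \frac{\pi}{N}\cot(\frac{\pi d_v}{N})$ when $\gcd(d_v,N)=1$, and the crucial observation is that $d_v$ and the correction term both depend on $h,k$ only through residues modulo $N$ (note $\frac{hH+1}{k}$ is an integer since $hH\equiv -1\pmod k$, and all indices enter $\cot$, $\delta$, and $\overline{\,\cdot\,}$ only via their class mod $N$). Hence each of the finitely many summands over $c,e,\psi$ takes values in a finite set determined by $N$, and $|a_0(h,k)|$ is bounded by a sum of finitely many bounded terms; the $\cot$ is bounded because its argument stays away from multiples of $\pi$ precisely when $\gcd(d_v,N)=1$, and when $\gcd(d_v,N)\ne 1$ the factor $\delta(c_v)$ forcing the $\zeta^{d_v}(1)$ term to appear must be checked to be compatible, which is the main obstacle and should be handled by observing that the $\cot$ singularity never actually occurs among the surviving terms. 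For the linear bound on $a_n(h,k)$, the inner sum over divisors $m\mid n$ has at most $d(n)$ terms each of modulus $1$, but to get the clean bound $C_1 n$ I would simply use $|a_n|\le \frac{2\pi}{N}\cdot\varphi(N)\cdot N\cdot\sum_{m\mid n}1 \le C_1 n$ after summing over the finitely many $\psi,c,e$, using the trivial estimate $\sum_{m\mid n}1 \le n$. In both bounds, the key point to emphasize is that the number of summands and the per-summand bounds depend only on $N$, so the constants are uniform in $h$ and $k$ as required.
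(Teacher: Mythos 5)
Your proposal follows essentially the same route as the paper: apply the transformation law $g_1^v[\gamma]_1 = g_1^{\gamma(v)}$ to the indices $(c,e)\mapsto(-hc-ke,\,\tfrac{hH+1}{k}c+He)$, read off $a_0$ and $a_n$ from the Fourier expansion of $g_1^v$, and bound $a_n$ via the trivial divisor estimate $d(n)\le n$ with constants depending only on $N$ since all indices enter only through their residues modulo $N$. If anything you are slightly more careful than the paper on the bound for $a_0$, correctly noting that the potential $\cot$ singularity at $d_v\equiv 0\pmod N$ is harmless because $\alpha_{h,k}$ acts invertibly on $(\Z/N\Z)^2$, so $(c_v,d_v)\equiv(0,0)$ forces $(c,e)\equiv(0,0)$, where $\psi(c)=0$ kills the term.
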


\begin{proof}
Recall that
$$
E_{r,N} [ \alpha_{h,k}]_1 (\tau) = \sum_{\substack{\psi \smod{N} \\ \psi(-1) = -1}} \psi(r') E_1^{\psi}[ \alpha_{h,k} ]_1(\tau).
$$
We have
$$
E_1^{\psi} [ \alpha_{h,k} ]_1 (\tau) = -\frac{1}{ 2 \pi i } \sum_{c = 0}^{N-1} \sum_{e = 0}^{N-1} \psi(c)  g_1^{\alpha_{h,k}(c,e)} (\tau)
$$
where
$$
\alpha_{h,k}(c,e) = \left( -h c -k e, \left( \frac{hH +1}{k} \right)c +H e \right).
$$

Using the Fourier expansion for $g_1^{v} (\tau)$, we obtain the following expressions for the coefficients of $E_1^{\psi} [ \alpha ] (\tau)$:
$$
a_0 (h,k)= \sum_{\substack{ \psi \smod{N} \\ \psi(-1) = -1}} \psi(r') c_{ \psi}(h,k)
$$
where
$$
c_{\psi} (h,k)= -\frac{1}{2 \pi i} \sum_{c=0}^{N-1} \sum_{e = 0}^{N-1} \psi (c)  \left(\delta( -hc -ke) \zeta^{( \frac{hH +1}{k})c +H e}(1) + \frac{2 \pi i}{N} \left( \frac{\overline{(-hc - ke)}}{N} - \frac{1}{2} \right)\right).
$$
and for $n \ge 1$:
$$
a_n (h,k)= \sum_{\substack{\psi \smod{N} \\ \psi ( -1) = -1}} \psi (r') \left( \sum_{c=0}^{N-1} \sum_{e = 0}^{N-1} \left( \sum_{ \substack{d | n \\ \frac{n}{d} \equiv -hc - ke \smod{N}}} \sign(d) e^{\frac{2 \pi i }{N} d (  \frac{hH + 1}{k} c + He)} \right) \right).
$$
The inner sum is bounded by the divisor counting function $2d(n)$ which is less than $2n$, so the whole expression is bounded by $\varphi (N) N^2 n$. Note that much stronger bounds for $d(n)$ exist, in fact $d(n) = O( n^{\epsilon})$ for any $\epsilon > 0$, however we require only the crude bound $d(n) \le n$.  This proves the second inequality in the proposition.

\end{proof}

We can describe the behavior of the eta function near the cusps using the transformation formula for $\eta(\tau)$. For $\tau' = \frac{H}{k} + \frac{i}{z}$, and $\tau = \frac{h}{k} + \frac{i z}{k^2}$, we have $\alpha_{h,k}^{-1} ( \tau ) = \tau'$. 

\begin{theorem}\label{thm:etatransf}
Let $i,s, h,k, H, \tau, \tau'$ be as described above. Then we have
$$
\frac{q^{\frac{1}{24}}}{\eta ( \tau)} = \frac{e^{2 \pi i \frac{\tau '}{24}}}{\eta(\tau')} \left( \frac{z}{k} \right)^{\frac{1}{2}} e^{- \frac{ \pi z}{12 k^2} + \frac{\pi}{12 z} + \pi i s(-H,k)},
$$
where $s(h,k)$ denotes the usual Dedekind sum,
\begin{equation}\label{eqDedekind}
s(h,k) = \sum_{r = 1}^{k-1} \frac{r}{k} \left( \frac{hr}{k} - \Big\lfloor \frac{hr}{k} \Big\rfloor - \frac{1}{2} \right). 
\end{equation}
\end{theorem}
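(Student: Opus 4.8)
The plan is to derive this directly from the classical Dedekind eta transformation formula together with the relation $\tau=\alpha_{h,k}(\tau')$, which follows from the stated identity $\alpha_{h,k}^{-1}(\tau)=\tau'$. Since the lower-left entry of $\alpha_{h,k}$ is $-k<0$, I would first replace $\alpha_{h,k}$ by $-\alpha_{h,k}=\left(\begin{smallmatrix} h & -\frac{hH+1}{k} \\ k & -H\end{smallmatrix}\right)\in\SLZ$, which induces the same M\"obius transformation but has positive lower-left entry $c=k$, so that the transformation formula applies in its standard form with $a=h$, $d=-H$.

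Then I would invoke the eta transformation in the form
\[
\eta(\gamma\tau')=\epsilon(\gamma)\,\{-i(c\tau'+d)\}^{1/2}\,\eta(\tau'),\qquad \epsilon(\gamma)=\exp\left(\pi i\left(\tfrac{a+d}{12c}-s(d,c)\right)\right),
\]
valid for $c>0$. Here $\gamma\tau'=\alpha_{h,k}(\tau')=\tau$, and a short computation gives $c\tau'+d=k\tau'-H=\frac{ki}{z}$, hence $-i(c\tau'+d)=\frac kz$ and $\{-i(c\tau'+d)\}^{1/2}=\left(\frac kz\right)^{1/2}$ in the principal branch (real and positive). Moreover $\frac{a+d}{12c}=\frac{h-H}{12k}$ and $s(d,c)=s(-H,k)$, so $\epsilon(\gamma)^{-1}=\exp\left(\pi i\left(\frac{H-h}{12k}+s(-H,k)\right)\right)$. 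Solving for $1/\eta(\tau)$ then yields $\frac{1}{\eta(\tau)}=\epsilon(\gamma)^{-1}\left(\frac zk\right)^{1/2}\frac{1}{\eta(\tau')}$.

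Finally I would multiply by $q^{1/24}=e^{\pi i\tau/12}$ and substitute $\tau=\frac hk+\frac{iz}{k^2}$ and $\tau'=\frac Hk+\frac iz$, expanding all exponential factors. On the left, $e^{\pi i\tau/12}=\exp\left(\frac{\pi i h}{12k}-\frac{\pi z}{12k^2}\right)$, and the phase $\frac{\pi i h}{12k}$ cancels against the $-\frac{\pi i h}{12k}$ inside $\epsilon(\gamma)^{-1}$, leaving $\exp\left(\frac{\pi i H}{12k}-\frac{\pi z}{12k^2}+\pi i s(-H,k)\right)$. On the right-hand side of the claimed identity, $e^{2\pi i\tau'/24}=\exp\left(\frac{\pi i H}{12k}-\frac{\pi}{12z}\right)$, whose $-\frac{\pi}{12z}$ exactly cancels the $+\frac{\pi}{12z}$ in the explicit correction factor, again producing $\exp\left(\frac{\pi i H}{12k}-\frac{\pi z}{12k^2}+\pi i s(-H,k)\right)$. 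The two sides therefore agree term by term. The one point requiring real care is the bookkeeping of the square-root branch and the fixing of a single consistent convention for the eta multiplier, so that the Dedekind sum emerges precisely as $s(-H,k)$ with the correct sign; once this is pinned down, the cancellations above are exact and no residual factor survives.
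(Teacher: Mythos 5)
Your derivation is correct and is essentially the same argument as the one the paper relies on: the paper simply cites Apostol's Theorem 5.1, whose proof is exactly this computation from the Dedekind eta functional equation $\eta(\gamma\tau')=\epsilon(\gamma)\{-i(c\tau'+d)\}^{1/2}\eta(\tau')$ applied to $-\alpha_{h,k}$, and your bookkeeping (in particular $c\tau'+d=ki/z$, the cancellation of the phases $\pm\frac{\pi i h}{12k}$ and $\pm\frac{\pi}{12z}$, and the identification of the multiplier as $e^{\pi i s(-H,k)}$ from $s(-d,c)$ with $d=-H$) all checks out. The only blemish is the parenthetical claim that $(k/z)^{1/2}$ is ``real and positive'': since $z$ is a general complex number with $\Re(z)>0$, the quantity $k/z$ merely has positive real part, which is all that is needed to fix the principal branch consistently on both sides.
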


We refer the reader to Theorem 5.1 in \cite{Apostol} for a proof of this fact. 

\section{Asymptotic expansions \`a la Wright}\label{secWright}
In \cite{RhoadesNgo}, Ngo and Rhoades established very general Propositions to widen the applicability of Wright's Circle Method. For the convenience of the reader, we recall their results here.

Let $\xi(q)$ and $L(q)$ be analytic functions for $|q|<1$ and $q\notin\R_{\leq 0}$, such that $$\xi(q)L(q)=:\sum_n a(n)q^n$$ is analytic for $|q|<1$. Further assume the following hypotheses, where $0<\delta<\tfrac\pi 2$ and $c>0$ are fixed constants.
\begin{enumerate}
\item\label{hypo1} As $\sigma\rightarrow 0$ in the cone $|\arg \sigma|<\tfrac \pi 2-\delta$ and $|\Im \sigma|\leq\pi$ we have either
\begin{equation}\label{LpolyMajor}
L(e^{-\sigma})=\sigma^{-B}\left(\sum\limits_{\ell=0}^{k-1}\alpha_\ell \sigma^\ell+O_\delta(\sigma^k)\right)
\end{equation}
or
\begin{equation}\label{LlogMajor}
L(e^{-\sigma})=\frac{\log \sigma}{\sigma^B}\left(\sum\limits_{\ell=0}^{k-1}\alpha_\ell \sigma^\ell+O_\delta(\sigma^k)\right)
\end{equation}
for some $B\in\R$. 
\item\label{hypo2} As $\sigma\rightarrow 0$ in the cone $|\arg \sigma|<\tfrac \pi 2-\delta$ and $|\Im \sigma|\leq \pi$ we have
\begin{equation}\label{xiMajor}
\xi(e^{-\sigma})=\sigma^\beta e^{\frac{c^2}{\sigma}}\left(1+O_\delta(e^{-\frac \gamma \sigma})\right)
\end{equation}
for real constants $\beta\geq 0$ and $\gamma>c^2$.
\item\label{hypo3} As $\sigma\rightarrow 0$ in the cone $\tfrac \pi 2-\delta\leq |\arg \sigma|<\tfrac \pi 2$ and $|\Im \sigma|\leq \pi$ one has
\begin{equation}\label{Lminor}
|L(e^{-\sigma})|\ll_\delta |\sigma|^{-C},
\end{equation}
where $C=C(\delta)>0$.
\item\label{hypo4} As $\sigma\rightarrow 0$ in the cone $\tfrac \pi 2-\delta\leq |\arg \sigma|<\tfrac \pi 2$ and $|\Im \sigma|\leq \pi$ one has
\begin{equation}\label{ximinor}
|\xi(e^{-\sigma})|\ll_\delta\xi(|e^{-\sigma}|)e^{-K\Re\left(\frac 1{\sigma}\right)},
\end{equation}
where $K=K(\delta)>0$.
\end{enumerate} 
In the case of \eqref{LpolyMajor} we say that $L$ is of \emph{polynomial type near $1$}, in that of \eqref{LlogMajor} we call $L$ of \emph{logarithmic type near $1$}.

In the language of Wright's Circle Method, the hypotheses in \eqref{LpolyMajor}--\eqref{xiMajor} determine the asymptotics of $L$ and $\xi$ on the \emph{major arc}, and those in \eqref{Lminor} and \eqref{ximinor} that on the \emph{minor arc}.
We require the following two propositions (Propositions 1.8 and 1.10 in \cite{RhoadesNgo}) for our results, the first of which is originally due to Wright \cite{Wright}.
\begin{proposition}\label{Wrightpoly}
Suppose the hypotheses (1)--(4) are satisfied and that $L$ has polynomial type near $1$. Then there is an asymptotic expansion
\begin{equation}\label{asympoly}
a(n)=e^{2c\sqrt{n}}n^{\frac 14(2B-2\beta-3)}\left(\sum\limits_{r=0}^{M-1}p_rn^{-\frac r2}+O(n^{-\frac M2})\right),
\end{equation}
where
\begin{equation}
p_r=\sum\limits_{s=0}^r \alpha_sw_{s,r-s}
\end{equation}
with $\alpha_s$ as in \eqref{LpolyMajor} and 
\begin{equation}
w_{s,r}=\frac{c^{s+\beta-B+\frac 12}}{(-4c)^r2\pi^\frac 12}\cdot\frac{\Gamma\left(s+\beta-B+r+\frac 32\right)}{r!\Gamma\left(s+\beta-B-r+\frac 32\right)}
\end{equation}
for the coefficients $a(n)$ of $\xi(q)L(q)$ as $n\rightarrow\infty$.
\end{proposition}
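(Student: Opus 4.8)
The plan is to extract $a(n)$ by a saddle-point (Wright's) analysis of the Cauchy integral
\[a(n) = \frac{1}{2\pi i}\oint \frac{\xi(q)L(q)}{q^{n+1}}\,dq,\]
taken over a circle $|q| = e^{-\rho}$. First I would substitute $q = e^{-\sigma}$, turning the contour into the vertical segment $\sigma = \rho - i\theta$ with $\theta \in [-\pi,\pi]$, so that
\[a(n) = \frac{1}{2\pi i}\int_{\rho - i\pi}^{\rho + i\pi} \xi(e^{-\sigma})L(e^{-\sigma})\,e^{n\sigma}\,d\sigma.\]
The dominant exponential in the integrand is $e^{c^2/\sigma + n\sigma}$, coming from \eqref{xiMajor}, whose saddle point on the positive real axis sits at $\sigma_0 = c/\sqrt n$; accordingly I would fix $\rho = c/\sqrt n$. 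I would then split the contour into a \emph{major arc}, where $\sigma$ stays inside the cone $|\arg\sigma| < \frac\pi2 - \delta$ (roughly $|\theta| \le \rho\tan(\frac\pi2-\delta)$, with $|\Im\sigma| \le \pi$ automatic for large $n$), and a \emph{minor arc} consisting of the remainder.

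On the major arc I would insert the expansions \eqref{LpolyMajor} and \eqref{xiMajor}, giving
\[\xi(e^{-\sigma})L(e^{-\sigma})\,e^{n\sigma} = \sigma^{\beta - B}e^{c^2/\sigma + n\sigma}\left(\sum_{\ell=0}^{k-1}\alpha_\ell\sigma^\ell + O(\sigma^k)\right)\bigl(1 + O(e^{-\gamma/\sigma})\bigr).\]
Since $\gamma > c^2$, the factor $e^{-\gamma/\sigma}$ is dominated relative to the saddle value and contributes only exponentially small error, which I would discard. This reduces the major-arc integral to a finite linear combination of Hankel integrals
\[J_a(n) := \frac{1}{2\pi i}\int \sigma^{a}\,e^{c^2/\sigma + n\sigma}\,d\sigma, \qquad a = \beta - B + \ell,\]
after extending the truncated segment to the full Hankel contour wrapping the negative real axis; the added tails are exponentially negligible because the integrand decays there.

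The key computation is then to evaluate $J_a(n)$ in closed form. Substituting $t = n\sigma$ and using the classical representation
\[\frac{1}{2\pi i}\int t^{-s}e^{t + x/t}\,dt = x^{(1-s)/2}I_{s-1}(2\sqrt x)\]
with $x = c^2 n$, I would obtain
\[J_a(n) = \frac{c^{a+1}}{n^{(a+1)/2}}\,I_{-a-1}\bigl(2c\sqrt n\bigr).\]
Feeding in the large-argument asymptotic
\[I_\nu(z) \sim \frac{e^{z}}{\sqrt{2\pi z}}\sum_{j\ge0}(-1)^j a_j(\nu)\,z^{-j},\]
where $a_j(\nu) = \frac{\Gamma(\nu+j+\frac12)}{j!\,2^j\,\Gamma(\nu-j+\frac12)}$, produces the factor $e^{2c\sqrt n}$, the overall power $n^{\frac14(2B-2\beta-3)}$, and an expansion in powers of $n^{-1/2}$. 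Since $a_j$ depends only on $\nu^2$, rewriting this ratio with $\nu = B-\beta-s-1$ is exactly what turns $a_j(\nu)$ into the $\Gamma$-ratio appearing in $w_{s,r}$, while the $(-1)^j(2c)^{-j}$ from the Bessel expansion supplies the $(-4c)^{-r}$; collecting the contribution of $\alpha_s$ to the $n^{-r/2}$ term then yields $p_r = \sum_{s=0}^r \alpha_s w_{s,r-s}$.

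The main obstacle, I expect, is the bookkeeping and uniform error control rather than any single hard estimate. Concretely I would need to verify that the minor arc is genuinely negligible, using \eqref{Lminor} and \eqref{ximinor} — there the factor $e^{-K\Re(1/\sigma)}$ beats the saddle value $e^{2c\sqrt n}$, so the minor arc is exponentially smaller; that the truncation error $O(\sigma^k)$, integrated against $e^{c^2/\sigma + n\sigma}$, contributes at the level $O(n^{-M/2})$ for the appropriate $M$; and that the tails introduced by passing to the full Hankel contour, together with the discarded $O(e^{-\gamma/\sigma})$ term, are all absorbed into the error. The delicate point is that every one of these estimates must hold uniformly as $\rho = c/\sqrt n \to 0$, and that the index shift $\nu = -a-1$ in the Bessel asymptotic lines up precisely with the $\Gamma$-ratio in $w_{s,r}$; matching those indices correctly is where the calculation is easiest to get wrong.
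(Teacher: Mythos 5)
The paper offers no proof of this proposition --- it is quoted verbatim from Ngo--Rhoades (Proposition 1.8 there, originally due to Wright) --- so the only comparison available is with the cited proof, which your sketch reproduces faithfully: the saddle point at $\sigma_0=c/\sqrt{n}$, the major/minor split along the cone $|\arg\sigma|<\tfrac\pi2-\delta$, the reduction to Hankel/Bessel integrals via $t=n\sigma$, and the large-argument expansion of $I_\nu$. Your key bookkeeping is also correct: with $\nu=B-\beta-s-1$, the evenness of the coefficients $a_j(\nu)=\frac{\Gamma(\nu+j+\frac12)}{j!\,2^j\,\Gamma(\nu-j+\frac12)}$ in $\nu$ converts $\frac{(-1)^j a_j(\nu)}{(2c)^j}$ into exactly $\frac{1}{(-4c)^j}\cdot\frac{\Gamma(s+\beta-B+j+\frac32)}{j!\,\Gamma(s+\beta-B-j+\frac32)}$, which is $w_{s,j}$ up to the common factor $\frac{c^{s+\beta-B+\frac12}}{2\pi^{1/2}}$, so the only outstanding work is the uniform minor-arc and truncation error control that you already flag.
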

\begin{proposition}\label{Wrightlog}
Suppose hypotheses (1)--(4) are satisfied and that $L$ has logarithmic type near $1$ such that $B-\beta=\tfrac 12$, with $B$ and $\beta$ as in equations \eqref{LlogMajor} and \eqref{xiMajor} respectively. Then we have 
\[a(n)=-e^{2c\sqrt{n}}n^{-\frac 12}\frac{\alpha_0}{4\pi^\frac 12}\left(\log n-2\log c+O(n^{-\frac 12}\log n)\right)\]
as $n\rightarrow\infty$.
\end{proposition}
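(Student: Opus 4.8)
The cleanest route is to deduce the logarithmic case from the already-established polynomial case \Cref{Wrightpoly} by differentiating in the exponent $B$. The coefficients are recovered by a Cauchy integral which, writing $q=e^{-\sigma}$ and integrating over the circle $|q|=e^{-c/\sqrt n}$, becomes
\[
a(n)=\frac{1}{2\pi}\int_{-\pi}^{\pi}\xi(e^{-\sigma})L(e^{-\sigma})e^{n\sigma}\,dy,\qquad \sigma=\frac{c}{\sqrt n}+iy,
\]
the radius being chosen so that $\sigma=c/\sqrt n$ is the saddle point of $e^{c^2/\sigma+n\sigma}$, at which this exponent equals $2c\sqrt n$. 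First I would split the contour into a major arc, where $\sigma$ lies in the cone of \eqref{LlogMajor} and \eqref{xiMajor}, and a minor arc, where \eqref{Lminor} and \eqref{ximinor} apply. The minor arc contributes a quantity exponentially smaller than $e^{2c\sqrt n}$, by the same estimate used in the proof of \Cref{Wrightpoly}; crucially this holds uniformly as the relevant exponent is varied, which is all I will need.

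The key observation is that \eqref{LlogMajor} differs from \eqref{LpolyMajor} only by the factor $\log\sigma$, and $\log\sigma\cdot\sigma^{-B}=-\partial_B\sigma^{-B}$. Since $L\mapsto a(n)$ is linear, the logarithmic-type coefficients are obtained from the polynomial-type ones by applying $-\partial_B$. Concretely, I would carry the exponent as a parameter inside the major-arc analysis of \Cref{Wrightpoly}: after deforming the major arc to a Hankel contour and rescaling $\sigma=(c/\sqrt n)u$, the leading contribution is
\[
\alpha_0\,J(s),\qquad J(s):=\frac{1}{2\pi i}\int \sigma^{-s}e^{c^2/\sigma+n\sigma}\,d\sigma=\left(\frac{c}{\sqrt n}\right)^{1-s}I_{s-1}(2c\sqrt n),
\]
where $s=B-\beta$ and $I_\nu$ is the modified Bessel function. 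The polynomial case is $\alpha_0 J(B-\beta)$, and the logarithmic case is $-\alpha_0\,\partial_s J(s)$ evaluated at $s=B-\beta=\tfrac12$.

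Differentiating, $\partial_s J(s)=(\tfrac12\log n-\log c)\left(\tfrac{c}{\sqrt n}\right)^{1-s}I_{s-1}(2c\sqrt n)+\left(\tfrac{c}{\sqrt n}\right)^{1-s}\partial_\nu I_\nu(2c\sqrt n)\big|_{\nu=s-1}$. The $\log n$ in the statement is produced entirely by the first term, i.e. by $\partial_s$ acting on the factor $(\sqrt n)^{s-1}$. At $s=\tfrac12$ one has $I_{-1/2}(z)=\sqrt{2/(\pi z)}\cosh z\sim e^{z}/\sqrt{2\pi z}$, giving $J(\tfrac12)\sim \tfrac{1}{2\pi^{1/2}}e^{2c\sqrt n}n^{-1/2}$; meanwhile the large-argument expansion $I_\nu(z)\sim \tfrac{e^z}{\sqrt{2\pi z}}(1-\tfrac{4\nu^2-1}{8z}+\cdots)$ shows that $\partial_\nu I_\nu$ is smaller by a factor $z^{-1}\sim n^{-1/2}$, so the second term only feeds the error. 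Hence
\[
a(n)=-\alpha_0(\tfrac12\log n-\log c)\tfrac{1}{2\pi^{1/2}}e^{2c\sqrt n}n^{-1/2}+(\text{error})=-e^{2c\sqrt n}n^{-1/2}\frac{\alpha_0}{4\pi^{1/2}}\big(\log n-2\log c\big)+(\text{error}),
\]
which is the claimed formula.

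The main obstacle is making the differentiation rigorous: I must justify differentiating $J(s)$, the minor-arc bound, and the remainder in \eqref{asympoly} with respect to $s$, uniformly for $s$ near $\tfrac12$, and then verify that every discarded piece — the higher coefficients $\alpha_1,\alpha_2,\dots$ in \eqref{LlogMajor}, the order-derivative $\partial_\nu I_\nu$, the higher Bessel terms, and the differentiated minor arc — is $O(n^{-1/2}\log n)$ relative to the main term. Differentiation under the integral sign is legitimate because $J(s)$ is entire in $s$ and the contour integrals converge locally uniformly; the remaining error bookkeeping is routine but is precisely where the stated error term $O(n^{-1/2}\log n)$ must be confirmed to absorb the $\log n$ arising from the first correction to $I_{-1/2}$.
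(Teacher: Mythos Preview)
The paper does not prove \Cref{Wrightlog}: both this proposition and \Cref{Wrightpoly} are quoted from Ngo and Rhoades \cite{RhoadesNgo} (see the sentence introducing them in \Cref{secWright}), and no argument is supplied here. There is therefore no in-paper proof to compare your proposal against.

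On its own merits, your strategy of recovering the logarithmic case from the polynomial one via $\log\sigma\cdot\sigma^{-B}=-\partial_B\sigma^{-B}$ is sound, and the main-term computation is correct: the identification $J(s)=(c/\sqrt n)^{1-s}I_{s-1}(2c\sqrt n)$, the evaluation $J(\tfrac12)\sim(2\sqrt\pi)^{-1}e^{2c\sqrt n}n^{-1/2}$, and the observation that $\partial_\nu I_\nu(z)$ is $O(z^{-1})$ relative to $I_\nu(z)$ all check out and deliver exactly the stated leading term. The point you rightly flag as the real content is uniformity in $s$: to pass from ``the polynomial asymptotic holds for each nearby $s$'' to ``its $s$-derivative gives the logarithmic asymptotic'' you need the major-arc remainder and the minor-arc bound to hold uniformly for $s$ in a complex neighbourhood of $\tfrac12$, so that Cauchy's estimate on a small circle in the $s$-plane controls the derivative of the error. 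This is obtainable --- the relevant integrals are entire in $s$ and the estimates behind \Cref{Wrightpoly} are insensitive to small perturbations of the exponent --- but it is the substance of the argument, and your sketch stops short of carrying it out. A direct proof would instead insert \eqref{LlogMajor} into the major-arc integral and evaluate $\int\sigma^{\beta-B}\log\sigma\,e^{c^2/\sigma+n\sigma}\,d\sigma$ by the same rescaling $\sigma=(c/\sqrt n)u$, producing $\log(c/\sqrt n)$ times a Bessel integral plus a bounded Hankel integral; your differentiation trick is a clean repackaging of exactly that computation.
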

\section{Proof of the main theorems}\label{secProof1}
\subsection{Proof of Theorem \ref{thm:main}}
We will follow the proof of Rademacher's formula for the partition function, as described by Apostol in chapter 5 of \cite{Apostol}. Throughout the proof, let $r$ and $N$ be fixed coprime  integers with $N \ge 3$.

By Cauchy's integral formula, we have:
$$
\diff =  \frac{1}{2 \pi i} \int_\calC \frac{G_{r,N} (q)}{q^{n+1}} dq, 
$$
where $\calC$ is any positively oriented contour lying inside the unit circle, which contains the origin in its interior. By \Cref{thm:eisenstein}, we can decompose this integral into two integrals, as follows,
\begin{equation}\label{equation:diff}
\diff = T_1 + T_2 := \frac{1}{2 \pi i \varphi(N)} c_{r,N} \int_\calC   \frac{q^{\frac{1}{24}}}{\eta(\tau)} \frac{1}{q^{n+1}} dq + \frac{1}{2 \pi i \varphi(N)} \int_\calC  \frac{ q^{\frac{1}{24}} E_{r,N} (\tau) }{\eta(\tau) q^{n+1}} dq.
\end{equation}

The first integral is basically the one Rademacher considered, which yields (see \eqref{eqRademacher})
$$
T_1 =\frac{c_{r,N}}{ \varphi(N)} \frac{2 \pi}{(24)^{\frac{3}{4}}} \left( n - \frac{1}{24} \right)^{- \frac{3}{4}} \sum_{k = 1}^{\infty} A_k(n) k^{-1} I_{\frac{3}{2}} \left( \frac{\pi}{k} \sqrt{ \frac{2}{3} \left( n - \frac{1}{24} \right) } \right), 
$$
where $I_{\frac{3}{2}}$ is the order $\tfrac{3}{2}$ modified Bessel function of the first kind,
\begin{equation}\label{eqKloosterman}
A_k(n) = \sum_{\substack{ 0 \le h < k \\ (h,k) =1 }} e^{ \pi i s(h,k) - 2 \pi i n \frac{h}{k}},
\end{equation}
and $s(h,k)$ is as in \eqref{eqDedekind}.

For the rest of the subsection, we will use Rademacher's technique to obtain an asymptotic formula for $T_2$. We define the contour $\calC$ to be given by $q = e^{2 \pi i \tau}$ where $\tau$ follows Rademacher's path of integration $P(n)$ which takes $\tau$ from $i$ to $i+1$ by going along the upper arcs of the Farey circles $C_{h,k}$ where $\frac{h}{k}$ is in the Farey sequence of order $n$. That is, $\gcd(h,k) =1 $, and $1 \le h \le k \le n$. Let $\gamma(h,k)$ denote the upper arc of the Ford circle $C_{h,k}$. Changing variables from $q$ to $\tau$, we have:
$$
T_2 = \frac{1}{\varphi (N)} \sum_{\substack{ 0 \le h \le k \le n \\ \gcd(h,k) = 1}} \int_{\gamma(h,k)}   \frac{ q^{\frac{1}{24}} E_{r,N} ( \tau )}{\eta(\tau) q^n} d\tau.
$$

As in \cite{Apostol}, we make the change of variables $z = -ik^2 \left( \tau - \frac{h}{k} \right)$, so that $\tau = \frac{h}{k} + \frac{iz}{k^2}$. This maps $\gamma (h,k)$ to an arc of the circle of radius $\frac{1}{2}$ centered at $\frac{1}{2}$. The contour goes in the clockwise direction from  the image of the left end point of $\gamma_(h,k)$ to the image of the right endpoint of $\gamma(h,k)$. It is well-known (see Theorem 5.8 in \cite{Apostol}) that if $\frac{h_1}{k_1} < \frac{h}{k} < \frac{h_2}{k_2}$ are adjacent in the Farey sequence of order $M$, then the image under the change of variables from $\tau$ to $z$ of the point where $\gamma(h_1, k_1)$ and $\gamma(h,k)$ intersect is given by 
$$z_1(h,k) =  \frac{k^2}{k^2 + k_1^2} + i \frac{k k_1}{k^2 + k_1^2}.$$ 
Similarly, the point where $\gamma(h,k)$ meets $\gamma (h_2, k_2)$ is mapped to 
$$z_2(h,k) = \frac{k^2}{k^2 + {k_2}^2} - \frac{i k k_2}{k^2 + {k_2}^2}.$$ 
This gives us the following: 
$$
T_2=\frac{i}{ \varphi (N)} \sum_{h,k} \frac{1}{k^2} \int_{z_1(h,k)}^{z_2(h,k)}   \frac{ q^{\frac{1}{24}} E_{r,N} (\tau) }{\eta(\tau) q^n} dz.
$$


Now we will estimate the integrand by its behavior near the cusps. Let $a_n(h,k)$ be as in \Cref{thm:cuspbehavior}. First we decompose the integrand,

\begin{align*}
 &\frac{\left( q^{\frac{1}{24}} E_{r,N} (\tau) \right)}{\eta(\tau) q^{n}} = \Psi_1 (\tau) + \Psi_2 (\tau) :=\\
 & -i \left( \frac{z}{k} \right)^{- \frac{1}{2}} \frac{1}{q^n} e^{- \frac{ \pi z}{12 k^2} + \frac{\pi}{12 z} + \pi i s(-H,k)} a_0(h,k)+ \left( \frac{\left( q^{\frac{1}{24}} E_{r,N} (\tau) \right)}{\eta(\tau) q^{n}} + i \left( \frac{z}{k} \right)^{- \frac{1}{2}} \frac{1}{q^n} e^{- \frac{ \pi z}{12 k^2} + \frac{\pi}{12 z} + \pi i s(-H,k)} a_0(h,k) \right).
\end{align*}
Now we show that $\Psi_2 ( \tau)$ is negligible. We can adjust to the contour of integration so that we are integrating along the chord adjoining $z_1 (h,k)$ and $z_2 (h,k)$ instead of the arc. This yields
\begin{align*}
| \Psi_2 (\tau) | &= \left| \left( \frac{z}{k} \right)^{- \frac{1}{2}}  e^{- \frac{\pi z}{12 k^2} + \frac{\pi}{12 z} +  \pi i s(h,k)}  \frac{1}{q^n} \left( \sum_{m=1}^{\infty} ( \sum_{Nj + \ell =m} a_{\ell}(h,k) p(j) ) e^{2 \pi i \frac{m}{N}( \frac{H}{k} +  \frac{i}{z})} \right)\right| \\
&\le \frac{\sqrt{k}}{\sqrt{ |z|}} e^{2 \pi \Re(z) n /k^2 } \left( \sum_{m=1}^{\infty} (\sum_{Nj+\ell =m}^{\infty} a_{\ell}(h,k) p(j) ) e^{  - 2 \pi \Re\left(\frac 1z\right) (\frac{m}{N} - \frac{1}{24})}  \right).
\end{align*}

For $z$ inside the circle, $\Re\left( \tfrac{1}{z} \right) \ge 1$ (\cite{Apostol}, page 107). By Lemma \ref{thm:cuspbehavior}, we have $|a_n (h,k)| \le C_1 n$ for $n \ge 1$. Thus:
$$
\sum_{m = 1}^{\infty} \left( \sum_{i + j =m}^{\infty} a_i(h,k) p(j) \right) e^{  - 2 \pi \Re\left( \frac 1z\right) \left(m - \frac{1}{24}\right)}  \le \sum_{m=1}^{\infty}  C_1 m^2 p(m) e^{- 2 \pi ( m - \frac{1}{24})}  
$$
The known asymptotics for $p(n)$ ensure that the sum is convergent.

By Theorem 5.9 in \cite{Apostol}, we have $\Re(z) \le |z| \le \frac{\sqrt{2}k}{n}$ on the chord, thus we have 
$
e^{2 \pi \Re(z) n /k^2} \le e^{8 \pi}.
$

Using the formulas for $z_1(h,k), z_2(h,k)$, we have that for $z$ on the chord,$$
|z| \ge \text{min} \{ \text{Re} (z_1(h,k)), \text{Re} (z_2(h,k)) \} \ge \frac{1}{2 n^2}.
$$

Thus we have the following:
$$
| \Psi_2 (\tau) | \le C k^{\frac{1}{2}} n,
$$
for some constant $C$, independent of $h,k,$ or $n$.

The length of the chord at most $\frac{2 \sqrt{2} k}{n}$, so we have
$$
\int_{z_1 (h,k)}^{z_2(h,k)} \Psi_2 (\tau) dz \le C 2 \sqrt{2} k^{\frac{3}{2}}. 
$$ 

Considering all the integrals over $\Psi_2$ that contribute the calculation of $T_2$, we estimate:
$$
\left|\frac{i}{\varphi(N)} \sum_{h,k} \frac{1}{k^2}  \int_{z_1 (h,k)}^{z_2(h,k)} \Psi_2 (\tau) dz\right| \le \frac{C 2 \sqrt{2}}{\varphi (N)} n^{\frac{3}{2}}.
$$

This allows us to approximate $T_2$ as follows:
\begin{align*}
T_2 &= \frac{i}{\varphi(N)} \sum_{h,k} \frac{1}{k^2} \int_{z_1 (h,k)}^{z_2 (h,k)}  -i \left( \frac{z}{k} \right)^{- \frac{1}{2}} \frac{1}{q^n} e^{- \frac{ \pi z}{12 k^2} + \frac{\pi}{12 z} + \pi i s(-H,k)} a_0(h,k)  dz + \frac{i}{\varphi(N)} \sum_{h,k} \frac{1}{k^2}  \int_{z_1 (h,k)}^{z_2(h,k)} \Psi_2 (\tau) dz \\
&= \frac{1}{\varphi(N)} \sum_{h,k} \frac{1}{k^2} \int_{z_1 (h,k)}^{z_2 (h,k)}  \left( \frac{z}{k} \right)^{- \frac{1}{2}} \frac{1}{q^n} e^{- \frac{ \pi z}{12 k^2} + \frac{\pi}{12 z} + \pi i s(-H,k)} a_0(h,k)  dz + O(n^{\frac{3}{2}}).
\end{align*}

To evaluate the integral, we adjust the contour of integration.Let $x_1 (n)$ be the point on the upper half of the circle $K$ with $|x_1 (n)| = \frac{1}{n}$, and let $x_2(n)$ be the point on the lower half of the circle with $|x_2(n)| = \frac{1}{n}$, and we rewrite the integral as $\int_{z_1 (h,k)}^{z_2(h,k)} = \int_{x_1(n)}^{x_2(n)} - \int_{x_1 (n)}^{z_1(h,k)} - \int_{z_2 (h,k)}^{x_2(n)}$. 

We will show that the second two integrals are negligible. On the arc between $x_1(n)$ and $z_1(h,k)$, and on the arc between $z_2(h,k)$ and $x_2(n)$, we have $\frac{1}{n} \le |z| \le \frac{\sqrt{2}k}{n}$. For $z$ on the circle, $\Re\left( \tfrac{1}{z}\right) = 1$. Combining these facts, we have that the integrand is bounded as follows:
\begin{align*}
\left| \frac{1}{k^2} \left( \frac{k}{z} \right)^{\frac{1}{2}} a_0(h,k) e^{-\frac{2 \pi i n h}{k}} e^{\frac{2 \pi n z}{k^2}}
e^{- \frac{ \pi z}{12 k^2} + \frac{\pi}{12 z} + \pi i s(h,k)} \right| 
&\le 
\frac{1}{k^2} \sqrt{k} |z|^{- \frac{1}{2}} |a_0(h,k)| e^{2 \pi n \Re(z) / k^2 - \frac{ \pi \Re(z)}{12 k^2} + \frac{ \pi \Re\left(\frac 1z\right)}{12}} \\
& \le k^{- \frac{3}{2}} \sqrt{n} C_0 k e^{2 \pi \sqrt{2}/k + \frac{ \pi }{12}}  \\
&\le C_0  e^{2 \sqrt{2} \pi + \frac{\pi}{12}} k^{- \frac{1}{2}}  n^{\frac{1}{2}}
\end{align*} 
Thus, making use of the fact that the length of both arcs is bounded by $\pi$, we have the following bound for the integral over these two arcs:
$$
\left| \left(\int_{x_1(n)}^{z_1 (h,k)} + \int_{z_2 ( h,k)}^{x_2(n)}\right) \left(   \frac{1}{k^2} \left( \frac{k}{z} \right)^{\frac{1}{2}} a_0(h,k) e^{-\frac{2 \pi i n h}{k}} e^{\frac{2 \pi n z}{k^2}}
e^{- \frac{ \pi z}{12 k^2} + \frac{\pi}{12 z} + \pi i s(h,k)} \right) \right|\le C_0 \pi e^{2 \sqrt{2} \pi + \frac{\pi}{12}}   .
$$  

Summing over $h,k$, we get the total contribution to the asymptotic from these arcs,
$$
\left| \frac{1}{\varphi(N)} \sum_{h,k} \left(\int_{x_1(n)}^{z_1 (h,k)} + \int_{z_2 ( h,k)}^{x_2(n)} \right) \left(   \frac{1}{k^2} \left( \frac{k}{z} \right)^{\frac{1}{2}} a_0(h,k) e^{-\frac{2 \pi i n h}{k}} e^{\frac{2 \pi n z}{k^2}}
e^{- \frac{ \pi z}{12 k^2} + \frac{\pi}{12 z} + \pi i s(h,k)} \right) \right| \le C_0 \pi e^{2 \sqrt{2} \pi + \frac{\pi}{12}} n^2.
$$

Therefore, we can rewrite $T_2$ as follows:
$$
T_2 =  \frac{1}{\varphi(N)} \sum_{h,k} B_k(n) \int_{x_1(n)}^{x_2(n)} \frac{1}{k^{\frac{3}{2}}} \left( \frac{1}{z} \right)^{\frac{1}{2}}  e^{\frac{2 \pi n z}{k^2}- \frac{ \pi z}{12 k^2} + \frac{\pi}{12 z}}
dz + O(n^2),
$$
where

$$
B_k(n) = \sum_{ \substack{ 1 \le h \le k \\ \gcd(h,k) = 1}} a_0(h,k) e^{\pi i s(h,k) - \frac{2 \pi i n h}{k}}.
$$
We change variables by taking $t = \frac{\pi}{12 z}$. This yields the following:
$$
T_2 =  -\left(\frac{\pi}{12}\right)^{\frac{1}{2}} \frac{1}{\varphi(N)} \sum_{h,k} B_k(n) k^{- \frac{3}{2}} \int_{\frac{\pi}{12} - i \frac{\pi}{12} \sqrt{n^2 - 1}}^{\frac{\pi}{12} + i \frac{\pi}{12} \sqrt{n^2 -1}}  t^{-\frac{3}{2}} 
e^{\frac{2 \pi^2 n}{12 t k^2}- \frac{ \pi^2 }{t 12^2 k^2} + t}  dt + O(n^2).
$$

Finally, we rewrite all this in terms of modified $I$ Bessel functions. We have the following well-known description of the $I$-Bessel function of order $\nu$ in terms of contour integrals (see \cite{Apostol}, p. 109)
\begin{equation}\label{eqBessel}
I_{\nu} (z) = \frac{ \left(\frac{1}{2} z\right)^\nu}{2 \pi i} \int_{c - i \infty}^{c + i \infty} t^{- \nu -1} e^{t + \frac{z^2}{4 t}} dt.
\end{equation}
Furthermore, one can express the $I$-Bessel functions whose order is half of an odd integer as an elementary function, e.g.,
$$
I_{\frac{1}{2}} (z) = \sqrt{\frac{2}{\pi z}} \sinh(z) = \frac{1}{\sqrt{2 \pi z}} ( e^z - e^{-z}).
$$

It is straightforward to show
\begin{align*}
\left| \int_{\frac{\pi}{12} + i \frac{\pi }{12} \sqrt{n^2 - 1}}^{\frac{\pi}{12} + i \infty} t^{- \frac{3}{2}} e^{ \frac{2 \pi^2 n}{12 t k^2} - \frac{\pi^2}{t 12^2 k^2} + t} dt\right|= O(n^{- \frac{1}{2}})
\end{align*}
and
$$
\left| \int_{\frac{\pi}{12} - i \infty}^{\frac{\pi}{12} - i \frac{\pi }{12} \sqrt{n^2 - 1}   } t^{- \frac{3}{2}} e^{ \frac{2 \pi^2 n}{12 t k^2} - \frac{\pi^2}{t 12^2 k^2} + t} dt\right| = O( n^{-\frac{1}{2}}). 
$$
Applying the trivial bound $| B_k(n) | \le C_0 k$, we have the following: 
\begin{align*}
T_2 &=   -\left(\frac{\pi}{12} \right)^{\frac{1}{2}} \frac{1}{\varphi(N)} \sum_{h,k} B_k(n) k^{- \frac{3}{2}} \left( \int_{\frac{\pi}{12} - i \infty}^{\frac{\pi}{12} + i \infty}  t^{-\frac{3}{2}} 
e^{\frac{2 \pi^2 n}{12 t k^2}- \frac{ \pi^2 }{t 12^2 k^2} + t}  dt + O(n^{-\frac{1}{2}}) \right)+ O(n^{2}) \\
&=  -\left( \frac{\pi}{12} \right)^{\frac{1}{2}} \frac{1}{\varphi(N)} \sum_{h,k} B_k(n) k^{- \frac{3}{2}} \int_{\frac{\pi}{12} - i \infty}^{\frac{\pi}{12} + i \infty}  t^{-\frac{3}{2}} 
e^{\frac{2 \pi^2 n}{12 t k^2}- \frac{ \pi^2 }{t 12^2 k^2} + t}  dt +  O(n^2).
\end{align*}

We take $z = 2 \sqrt{ \frac{\pi^2}{6k^2} \left( n - \frac{1}{24} \right)}$ in \eqref{eqBessel} and find

\begin{equation}\label{eqAsymp}
T_2 =-2 \pi i \frac{1}{\varphi (N) } \left( \frac{ \pi }{12} \right)^{\frac{1}{2}} \left( \frac{ \pi^2}{6} \left(n - \frac{1}{24} \right) \right)^{- \frac{1}{4}} \sum_{k=1}^{n} B_k(n) k^{-1} I_{\frac{1}{2}} \left( \frac{ \pi}{k} \sqrt{ \frac{2}{3} \left( n - \frac{1}{24} \right)} \right) + O(n^2).
\end{equation}

To compute the asymptotic, we need the $k=1$ term of this sum. We find with \eqref{eqZeta} and \Cref{thm:cuspbehavior} the following formula for $B_1(n) $,
$$
B_1(n) = a_0(0,1) = - \frac{1}{2 Ni} \sum_{\substack{ \psi \smod{N} \\ \psi(-1) = -1 }} \psi(r') \sum_{c = 0}^{N - 1} \psi(c) \cot \left( \frac{ \pi c}{N} \right).$$
We compute the error:
\begin{align*}
&2 \pi  \frac{1}{\varphi (N) } \left( \frac{ \pi }{12} \right)^{\frac{1}{2}} \left( \frac{ \pi^2}{6} \left(n - \frac{1}{24} \right) \right)^{- \frac{1}{4}} \sum_{k=2}^{n} \left|B_k(n) k^{-1} I_{\frac{1}{2}} \left( \frac{ \pi}{k} \sqrt{ \frac{2}{3} \left( n - \frac{1}{24} \right)} \right)\right| \\
&\le  \frac{\pi}{\varphi (N) }  \left(n - \frac{1}{24} \right)^{- \frac{1}{2}} C_0 \left( n^{\frac{5}{2}}   e^{\frac{ \pi}{2} \sqrt{ \frac{2}{3} \left( n - \frac{1}{24} \right)}} + O(1) \right)
\end{align*}

Thus we have:
\begin{equation}\label{eqErrorterm}
T_2 = \frac{1}{2 \sqrt{2} \varphi (N) N} \left( \sum_{\psi(-1) = -1} \psi( r ')  \sum_{c = 0}^{N - 1} \psi(c) \cot \left( \frac{ \pi c}{N} \right) \right) \frac{e^{ \left(\pi \sqrt{ \frac{2}{3} \left( n - \frac{1}{24} \right)} \right)}}{\sqrt{ \left( n - \frac{1}{24} \right)} }
+ O\left( n^2 e^{ \left( \frac{\pi}{2} \sqrt{ \frac{2}{3} \left( n - \frac{1}{24} \right)} \right)} \right). 
\end{equation}

This concludes the proof of \Cref{thm:main}. 

\subsection{Proof of Theorem \ref{thm:main2}}
We have already treated asymptotics for the difference of the number of parts in a partition of an integer $n$ which lie in two complementary congruence classes modulo a given number $N$. We did this with the help of the modularity of certain weight $1$ Eisenstein series. The ``limit case'', which still remains to be treated, is that, where we count parts in partitions of $n$ which are congruent to $0$ modulo $N$. Recall that we denote by $T_{0,N}(n)$ the number of these parts, then we have, as we have seen before, the identity of generating functions
\begin{equation}\label{T0t}
\sum\limits_{n=1}^\infty T_{0,N}(n)q^n=\sum\limits_{m=1}^\infty \frac{q^{Nm}}{(1-q^{Nm})^2}\prod\limits_{n\neq Nm}(1-q^n)^{-1}=\left(\sum\limits_{m=1}^\infty \sigma_0(m)q^{Nm}\right)\cdot\prod\limits_{n=1}^\infty(1-q^n)^{-1}.
\end{equation}
The sum 
\[S_0(\tau)=\sum\limits_{m=1}^\infty \sigma_0(m)q^{m}\]
on the right-hand side turns out to be essentially the period function of the Maa{\ss}-Eisenstein series $E(\tau;\tfrac 12)$. These period functions were introduced with a focus on those of Maa{\ss} cusp forms by Lewis and Zagier in \cite{LZ}, the non-cuspidal case is treated for example in \cite{CM}. A very convenient exposition for our purposes can be found in \cite{BC}. We require the following result (see Theorem 1 in \cite{BC} and Corollary 4.5 in \cite{RhoadesNgo}).
\begin{lemma}\label{periodasymp}
\begin{enumerate}
\item Let $M\in\N$. As $\sigma\rightarrow 0$ in the bounded cone $|\arg \sigma|<\tfrac \pi 2-\delta$ and $|\Im \sigma|\leq \pi$, one has
\[S_0\left(\frac{i\sigma}{2\pi}\right)=-\frac{\log \sigma}{\sigma}+\frac{\gamma_E}{\sigma}+\frac 14+\sum\limits_{n=1}^M\frac{B_{2n}^2}{(2n)!(2n)}\sigma^{2n-1}+O(\sigma^{2M+\frac 12}),\]
where $B_{2n}$ denotes the $2n$-th Bernoulli number.
\item For $\sigma\rightarrow 0$ in the cone $\tfrac \pi 2-\delta\leq |\arg \sigma|<\tfrac \pi 2$ and $|\Im\sigma|\leq \pi$ we have
\[S_0\left(\frac{i\sigma}{2\pi}\right)\ll_\delta t^{-\frac 32}.\]
\end{enumerate}
\end{lemma}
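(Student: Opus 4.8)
The plan is to treat the two cones separately, since the major-arc expansion of part (1) and the minor-arc bound of part (2) rest on genuinely different inputs. For part (1), I would start from the observation that, with $q=e^{-\sigma}$ (so that $\tau=\tfrac{i\sigma}{2\pi}$), we have $S_0\!\left(\tfrac{i\sigma}{2\pi}\right)=\sum_{n\geq 1}\sigma_0(n)e^{-n\sigma}$, together with the Dirichlet series identity $\sum_{n\geq 1}\sigma_0(n)n^{-s}=\zeta(s)^2$. Since the Mellin transform in $\sigma$ of $e^{-n\sigma}$ is $\Gamma(s)n^{-s}$, Mellin inversion gives
\[
S_0\!\left(\frac{i\sigma}{2\pi}\right)=\frac{1}{2\pi i}\int_{(c)}\Gamma(s)\zeta(s)^2\sigma^{-s}\,ds\qquad(c>1).
\]
The asymptotic expansion is then produced by shifting the line of integration to $\Re(s)=-(2M+\tfrac12)$ and collecting residues. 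This is legitimate precisely in the cone $|\arg\sigma|<\tfrac\pi2-\delta$: there $|\sigma^{-s}|=|\sigma|^{-\Re s}e^{(\Im s)\arg\sigma}$, and the exponential decay $|\Gamma(s)|\sim e^{-\pi|\Im s|/2}$ dominates this uniformly (with spare rate $e^{-\delta|\Im s|}$), so both the interchange of sum and integral and the contour shift are justified.

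The residue bookkeeping is the computational heart of part (1). The double pole of $\zeta(s)^2$ at $s=1$, combined with $\Gamma(1)=1$, $\Gamma'(1)=-\gamma_E$, and the Laurent expansion $\zeta(s)=(s-1)^{-1}+\gamma_E+\cdots$, produces exactly the terms $-\tfrac{\log\sigma}{\sigma}+\tfrac{\gamma_E}{\sigma}$; the simple pole of $\Gamma$ at $s=0$ contributes $\zeta(0)^2=\tfrac14$; and the simple poles of $\Gamma$ at the negative odd integers $s=1-2n$ ($n\geq1$) contribute the Bernoulli terms, evaluated via $\zeta(1-2n)=-B_{2n}/(2n)$. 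Crucially, no residue arises at the negative even integers, since the trivial (double) zeros of $\zeta(s)^2$ there cancel the simple poles of $\Gamma$. The remaining integral along $\Re(s)=-(2M+\tfrac12)$ is $O_\delta(|\sigma|^{2M+1/2})$ by the same $\Gamma$-decay, which gives the stated error term.

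For part (2), the Mellin representation degenerates as $\arg\sigma\to\pm\tfrac\pi2$, i.e. as $\tau=\tfrac{i\sigma}{2\pi}$ approaches the real axis, and this is where the main obstacle lies: the decay rate $e^{(\arg\sigma-\pi/2)|\Im s|}$ ceases to be uniform, and moreover $S_0$ has the real line as a natural boundary and is not modular, so one cannot estimate the Lambert series $\sum_d\tfrac{q^d}{1-q^d}$ term by term. The correct tool is the transformation behavior of $S_0$ as the period function of the Maa{\ss}--Eisenstein series $E(\tau;\tfrac12)$. I would invoke the functional equation under $\tau\mapsto-1/\tau$ to transfer the regime where $\sigma$ is small and nearly imaginary (so that $-1/\tau$ has large imaginary part, where $S_0$ decays) back to this cone; the explicit elementary correction terms appearing in the three-term relation then yield the required polynomial bound $\ll_\delta|\sigma|^{-3/2}$.

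Both statements are recorded in Theorem 1 of \cite{BC} and Corollary 4.5 of \cite{RhoadesNgo}, which I would cite for the precise constants; the Mellin argument above is the self-contained route to part (1), while part (2)—the control of $S_0$ as $\tau$ tends to the real axis inside the cone—is the genuinely delicate point and is where the period-function machinery of \cite{BC} is indispensable.
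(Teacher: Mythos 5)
The paper offers no proof of this lemma at all---it is imported directly from Theorem 1 of \cite{BC} and Corollary 4.5 of \cite{RhoadesNgo}---so your proposal necessarily takes a different route by supplying a self-contained derivation. For part (1), your Mellin-transform argument is the classical (Wigert-style) proof and is sound in structure: the representation $S_0(\tfrac{i\sigma}{2\pi})=\frac{1}{2\pi i}\int_{(c)}\Gamma(s)\zeta(s)^2\sigma^{-s}\,ds$, the justification of the contour shift from the uniform decay $\left|\Gamma(s)\sigma^{-s}\right|\ll |\Im s|^{\Re s-\frac12}\,|\sigma|^{-\Re s}e^{-\delta|\Im s|}$ valid for $|\arg\sigma|\le\frac\pi2-\delta$, and the residues at $s=1$ and $s=0$ all check out and reproduce $-\frac{\log\sigma}{\sigma}+\frac{\gamma_E}{\sigma}+\frac14$. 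One concrete caveat: at $s=1-2n$ the residue of $\Gamma$ is $\frac{(-1)^{2n-1}}{(2n-1)!}=-\frac{1}{(2n-1)!}$, so carrying your own computation through yields $-\frac{B_{2n}^2}{(2n)!\,(2n)}\sigma^{2n-1}$, the \emph{opposite} sign to the one displayed in the lemma (a numerical check at $\sigma=\frac12$ confirms the minus sign: the true value of $\sum_n d(n)e^{-n/2}$ lies \emph{below} $\frac{-\log\sigma+\gamma_E}{\sigma}+\frac14$ by about $\sigma/144$). This appears to be a sign slip in the lemma as stated rather than a defect of your method, and it is harmless for the paper since only the coefficients of $\sigma^{-1}\log\sigma$ and $\sigma^{-1}$ enter \Cref{thm:main2}; still, as written your proof does not literally deliver the displayed formula, so you should either carry the sign explicitly or note the discrepancy.

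For part (2) you correctly identify the crux: in the cone $\frac\pi2-\delta\le|\arg\sigma|<\frac\pi2$ the real part $\Re\sigma$ can be far smaller than $|\sigma|$, so the termwise bound $|S_0|\le\sum_n d(n)|q|^n\ll(\Re\sigma)^{-1}\log(1/\Re\sigma)$ is useless and the Mellin contour loses its uniform decay. Appealing to the analytic continuation of the period function of $E(\tau;\frac12)$ to the cut plane via the three-term relation of \cite{BC} is exactly the right input (note that $\Im(-1/\tau)=2\pi\cos(\arg\sigma)/|\sigma|$ need not be large here, so a naive modular-inversion transfer alone does not suffice---the continuation of the period function is what saves the argument). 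You leave the final polynomial bound to the cited reference rather than executing it, which matches the paper's own level of detail; note also that the ``$t$'' in the stated bound is undefined in the paper and should be read as $|\sigma|$, as you do.
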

With this we are able to prove \Cref{thm:main2}.
\begin{proof}[Proof of \Cref{thm:main2}]
According to \eqref{T0t}, we can write 
\[T_{0,N}(n)=\frac{1}{2\pi i}\int_\calC \frac{L(q)\xi(q)}{q^{n+1}}dq,\]
where $\calC$ is a circle around $0$ with radius less than $1$, $L(q)=(2\pi)^{-\frac 12}q^{\frac{1}{24}}S_0(q^N)$ and $\xi(q)=\frac{(2\pi)^\frac 12}{\eta(q)}$. From \Cref{periodasymp} we see that $L(e^{-\sigma})$ satisfies the hypotheses \ref{hypo1} and \ref{hypo3}, while $\xi(e^{-\sigma})$ is well-known to satisfy hypotheses \ref{hypo2} and \ref{hypo4}. To be more precise, $L(e^{-\sigma})$ is a sum of the functions $$L_1(e^{-\sigma})=-\frac{e^{-\frac{\sigma}{24}}}{N(2\pi)^\frac 12}\frac{\log\sigma}{\sigma},$$
which is of logarithmic type near $1$, the relevant parameters being $B=1$ and $\alpha_0=-\tfrac{1}{N(2\pi)^{\frac 12}}$, and 
\[L_2(e^{-\sigma})=\frac{e^{-\frac{\sigma}{24}}}{N(2\pi)^\frac 12}\left(\frac{\gamma_E-\log N}{\sigma}+\frac N4+\sum\limits_{m= 1}^\infty\frac{B_{2m}^2}{(2m)!(2m)}N^{2m}\sigma^{2m-1}\right),\]
which is of polynomial type near $1$, the relevant parameters being again $B=1$ and $\alpha_0=\tfrac{\gamma_E-\log N}{N(2\pi)^\frac 12}$. The function $\xi(e^{-\sigma})$ satisfies hypotheses \ref{hypo2} and \ref{hypo4} with the parameters $\beta=\tfrac 12$, $c^2=\tfrac{\pi^2}{6}$, $\gamma=4\pi^2$ (see Theorem 4.1 in \cite{RhoadesNgo}). Plugging this into Propositions \ref{Wrightlog} and \ref{Wrightpoly} yields the result.
\end{proof}

\section*{Competing interests}
The authors declare that they have no competing interests in the present manuscript.

\end{document}